\newtheorem{theorem}{Theorem}[section]
\newtheorem{lemma}[theorem]{Lemma}
\newtheorem{corollary}[theorem]{Corollary}
\newtheorem{proposition}[theorem]{Proposition}
\newtheorem{remark}[theorem]{Remark}
\newtheorem{definition}[theorem]{Definition}
\theoremstyle{remark}
\newtheorem{example}[theorem]{Example}
\numberwithin{equation}{section}
\newcommand{\bz}{{\mathbb B}}
\newcommand{\cz}{{\mathbb C}}
\newcommand{\gz}{{\mathbb Z}}
\newcommand{\nz}{{\mathbb N}}
\newcommand{\rz}{{\mathbb R}}
\newcommand{\sz}{{\mathbb S}}
\newcommand{\bfg}{\mathbf{g}}
\newcommand{\calA}{\mathcal{A}}
\newcommand{\calB}{\mathcal{B}}
\newcommand{\calF}{\mathcal{F}}
\newcommand{\calP}{\mathcal{P}}
\newcommand{\calR}{\mathcal{R}}
\newcommand{\calT}{\mathcal{T}}
\newcommand{\scrC}{\mathscr{C}}
\newcommand{\scrH}{\mathscr{H}}
\newcommand{\scrL}{\mathscr{L}}
\newcommand{\scrS}{\mathscr{S}}
\newcommand{\cl}{\mathrm{cl}}
\newcommand{\dbar}{d\hspace*{-0.08em}\bar{}\hspace*{0.1em}}
\newcommand{\eps}{\varepsilon}
\newcommand{\forget}[1]{}
\newcommand{\lra}{\longrightarrow}
\newcommand{\op}{\mathrm{op}}
\newcommand{\re}{\mathrm{Re}\,}
\newcommand{\st}{\mbox{\boldmath$\;|\;$\unboldmath}}
\newcommand{\wh}{\widehat}
\newcommand{\wt}{\widetilde}
\begin{document}
\title[Ellipticity in Pseudodifferential Algebras of Toeplitz Type]{
Ellipticity in Pseudodifferential\\ Algebras of Toeplitz Type}

\author{J\"org Seiler}
\address{Dipartimento di Matematica, Universit\`{a} di Torino, Italy}
\email{joerg.seiler@unito.it}

\begin{abstract}
Let $L^\star$ be a filtered algebra of abstract pseudodifferential operators equipped with a notion of ellipticity, and $T^\star$ be a subalgebra of operators of the form $P_1AP_0$, where $P_0$ and $P_1$ are two projections, i.e.,  $P_j^2=P_j$. The elements of $L^\star$ act as linear continuous operators in certain scales of abstract Sobolev spaces, the elements of the subalgebra in the corresponding subspaces determined by the projections. We study how the ellipticity in $L^\star$ descends to $T^\star$, focusing on parametrix construction, Fredholm property, and homogeneous principal symbols. Applications concern $SG$-pseudodifferential operators, pseudodifferential operators on manifolds with conical singularities, and Boutet de Monvel's algebra for boundary value problems. In particular, we derive invertibilty of the Stokes operator with Dirichlet boundary conditions in a subalgebra of Boutet de Monvel's algebra. We indicate how the concept generalizes to parameter-dependent operators.  
\end{abstract}

\maketitle

\tableofcontents

\section{Introduction}\label{sec:intro}

One of the major ideas in the theory of pseudodifferential operators is to study existence and regularity of solutions to partial differential equations in terms of a parametrix construction within an algebra of pseudodifferential operators. For example, given a $\mu$-th order differential operator $A$ on a closed smooth manifold $M$ whose homogeneous principal symbol is pointwise invertible on the unit co-sphere bundle of $M$, one can construct a pseudodifferential operator $B$ of order $-\mu$  such that both $AB-1$ and $BA-1$ are smoothing operators, i.e., are integral operators with a smooth integral kernel. Such an operator $B$ is called a parametrix of $A$. Concerning the partial differential equation $Au=f$ this has two important consequences: 
\begin{itemize}
 \item[$(1)$] \emph{Fredholm property}: If $H^s(M)$ denotes the standard $L_2$-Sobolev space 
  of regularity $s$, the operator $A:H^s(M)\to H^{s-\mu}(M)$ is a Fredholm operator, i.e., 
  its kernel is finite dimensional and its range is finite co-dimensional. Hence for any $f$ 
  satisfying a finite number of orthogonality conditions, the $($affine$)$ solution space is 
  finite dimensional. 
 \item[$(2)$] \emph{Elliptic regularity}: If $f$ has regularity $s-\mu$ then any solution $u$ 
  which has some a-priori regularity $t$ must have regularity $s$. 
\end{itemize}
The first property holds since smoothing operators are compact operators in any $H^s(M)$, the second is due to the fact that $B:H^{s-\mu}(M)\to H^s(M)$ and due to the smoothing property of the remainders. 

This concept -- to embed differential operators in a class of pseudodifferential operators and to construct parametrices to elliptic elements and to obtain the Fredholm property and elliptic regularity of solutions -- has by now been realized for a huge variety of different kinds of differential operators. Just to name a few, let us mention Boutet de Monvel's algebra for boundary value problems \cite{Bout1}, Schulze's calculi for manifolds with conical singularities, edges or higher singularities \cite{Schu91}, \cite{EgSc}, and Melrose's b-calculus  \cite{Melr93} for manifolds with corners. 

Boutet de Monvel's algebra for boundary value problems on a compact manifold $M$ with smooth boundary consists of operators of the form 
 \begin{equation*}
  \calA=
  \begin{pmatrix}
   A_++G & K \\
   T & Q
  \end{pmatrix}\;:\quad
  \begin{matrix}
   H^s(M,E_0)\\
   \oplus\\
   H^s(\partial M,J_0)
  \end{matrix}
  \longrightarrow
  \begin{matrix}
   H^{s-\mu}(M,E_1)\\
   \oplus\\
   H^{s-\mu}(\partial M,J_1)
  \end{matrix},
 \end{equation*}
where $E_j$ and $J_j$ are vector bundles over $M$ and $\partial M$, respectively, which are  allowed to be zero dimensional. Here, $A_+$ is the ``restriction" of a pseudodifferential operator $A$ on the double of $M$ to $M$, $G$ is a so-called singular Green operator, $K$ is a potential operator, $T$ is a trace operator, and $Q$ is a usual pseudodifferential operator on the boundary. $H^s$ refers to the $L_2$-Sobolev spaces. For further information on this calculus we refer the reader to \cite{Bout1}, \cite{Schu37}, or  \cite{Schr01};  in Section \ref{sec:6.1} we shortly sketch some details. For example, the Dirichlet problem for the Laplacian is included in this set-up as  
 $$\begin{pmatrix}
   \Delta \\ T 
  \end{pmatrix}\;:\quad
   H^s(M)
  \longrightarrow
  \begin{matrix}
   H^{s-2}(M)\\
   \oplus\\
   H^{s-2}(\partial M)
  \end{matrix},\qquad s>\frac{1}{2},$$
where $Tu=S(u|_{\partial M})$ with an invertible pseudodifferential operator $S$ of order $3/2$ on the boundary. The parametrix $($which in fact is an inverse in this case$)$ is then of the form $\begin{pmatrix}A_++G & K \end{pmatrix}$, where $u=(A_++G)f$ solves $\Delta u=f$ in $M$ and $u|_{\partial M}=0$, while $u=K\varphi$ solves $\Delta u=0$ in $M$ and $u|_{\partial M}=S^{-1}\varphi$. Ellipticity in Boutet de Monvel's algebra is determined by the invertibility of both the homogeneous principal symbol of $A$ and the principal boundary symbol associated with $\calA$. For differential problems this corresponds to Shapiro-Lopatinskii ellipticity of boundary value problems.  

Many boundary value problems fit into the framework of Boutet de Monvels algebra. However, there are important exceptions. For example, it is known that Dirac operators in even dimension cannot be completed with a boundary condition to be an elliptic element in Boutet de Monvel's algebra. For example, the Dirichlet condition $u\mapsto u|_{\partial M}$ in this case cannot be used directly, but can be replaced by the Atiyah-Patodi-Singer boundary condition $u\mapsto P(u|_{\partial M})$ where $P$ is the positive spectral projection of the tangential operator associated with the Dirac operator. In \cite{Schu37} Schulze extended the Boutet de Monvel calculus in such a way that problems of APS-type are included. In this extended calculus the operators are of the form 
\begin{equation}\label{eq:introB}
  \begin{pmatrix}
   1 & 0 \\
   0 & P_1
  \end{pmatrix}
  \begin{pmatrix}
   A_++G & K \\
   T & Q
  \end{pmatrix}
  \begin{pmatrix}
   1 & 0 \\
   0 & P_0
  \end{pmatrix},
\end{equation}
where $P_j$ are zero-order pseudodifferential operators on the boundary which are projections, i.e., $P_j^2=P_j$. Defining the closed subspaces 
 $$H^s(\partial M,J_j,P_j):=P_j\big(H^s(\partial M,J_j)\big)$$ 
of $H^s(\partial M,J_j)$, operators of the form \eqref{eq:introB} are considered as a maps  
\begin{equation}\label{eq:introA}
  \begin{matrix}
   H^s(M,E_0)\\
   \oplus\\
   H^s(\partial M,J_0,P_0)
  \end{matrix}
  \longrightarrow
  \begin{matrix}
   H^{s-\mu}(M,E_1)\\
   \oplus\\
   H^{s-\mu}(\partial M,J_1,P_1).
  \end{matrix} 
\end{equation}
A concept of ellipticity is developed and it is shown that elliptic operators have a parametrix of analogous structure, i.e., as in \eqref{eq:introB} but with $P_0$ and $P_1$ interchanged. For elliptic operators the map \eqref{eq:introA} is fredholm, and one has elliptic regularity in the scale of projected subspaces. In \cite{SS04} and \cite{SS06} Schulze and the author realized a similar calculus for boundary value problems without transmission property and operators on manifolds with edges, respectively. 

In the present paper we study such kind of calculi with projections from a general point of view. We consider a filtered algebra $L^\star$ of ``abstract" pseudodifferential operators equipped with a notion of ellipticity that is equivalent to the existence of a parametrix modulo smoothing operators, cf.\ Section \ref{sec:2} for details. Let us denote by $T^\star$ the subalgebra of operators of the form $P_1AP_0$, where $A$ belongs to $L^\star$ and $P_0$, $P_1$ are projections, i.e., $P_j^2=P_j$. In view of the structure of classical Toeplitz operators on the unit-circle\footnote{which are of the form $PM_fP$ where $P$ is the orthogonal projection of $L_2(S^1)$ onto the Hardy space of functions that extend holomorphically to the unit-disc, and $M_f$ denotes the operator of multiplication by a bounded function $f$ on the unit-circle} we call $T^\star$ a \emph{Toeplitz subalgebra}. We shall assume that the elements of $L^\star$ act as linear continuous operators in certain scales of ``abstract" Sobolev spaces. Applying the projections to these spaces yields a natural scale of closed subspaces in which the elements of the Toeplitz subalgebra act. Recall that the algebra of operators from \eqref{eq:introB} fits in this general framework. In Section \ref{sec:3} we show that ellipticity in $L^\star$ naturally induces a concept of ellipticity in the Toeplitz subalgebra $T^\star$. Assuming that $L^\star$ is closed under taking adjoints and that ellipticity is equivalent to the Fredholm mapping property, we show that also the induced ellipticity in $T^\star$ is equivalent to the Fredholm mapping property. Moreover, if ellipticity in $L^*$ is characterized by the invertibility of certain ``abstract" principal symbols, we show how the corresponding symbolic structure looks like in the Toeplitz subalgebra. Also we give a sufficient condition insuring the spectral invariance of Toeplitz subalgebras. In Section \ref{sec:3.5} we indicate how our approach can be extended to parameter-dependent operators; this will be further developed in a separate article. 

In the last three sections we discuss various concrete examples that are all covered by our approach. In particular, the above described generalized Boutet de Monvel calculus is obtained as a particular case of our general considerations as we shall discuss in Section \ref{sec:6}. The other examples concern $SG$-pseudodifferential operators on $\rz^n$, cf.\ \cite{Pare}, \cite{Cord}, \cite{Schr87}, as well as pseudodifferential operators on manifolds with conical singularities. For the latter we work with Schulze's cone algebra \cite{Schu91}. These examples are discussed in Section \ref{sec:4} and Section \ref{sec:5}, respectively. 

An important role in the analysis of the Navier-Stokes equations plays the so-called Stokes operator, i.e., the Laplacian considered on divergence free $($solenoidal$)$ vector fields. More precisely, if $M$ is a smoothly bounded compact domain in $\rz^n$ and $P$ denotes the Helmholtz projection for $M$ then the Stokes operator with Dirichlet boundary conditions is  
\begin{equation}\label{eq:st}
 P\Delta: H^s_\sigma(M,\cz^n)\cap\{u\st u|_{\partial M}=0\}\lra H^{s-2}_\sigma(M,\cz^n),\qquad s>3/2,
\end{equation}
where $H^s_\sigma(M,\cz^n):=P(H^s(M,\cz^n))$ with the usual $L_2$-Sobolev space $H^s(M,\cz^n)$ of smoothness $s$. In \cite{Giga} Giga has shown that in case $s=2$ this operator $($even in correponding $L_p$-Sobolev spaces$)$ is the generator of an analytic semigroup and that its resolvent has a certain pseudodifferential structure. Grubb \cite{GrSo}, \cite{Grub} studied the resolvent in terms of a parameter-dependent Boutet de Monvel algebra. In our context of Toeplitz subalgebras, we can view the Stokes operator as an element in a Toeplitz subalgebra of Boutet de Monvel's algebra. Taking for granted that \eqref{eq:st} is an isomorphism for $s=2$ (in fact, it is a self-adjoint positive operator) we will derive in Section \ref{sec:6.4} that it is an isomorphism for any $s>3/2$ and we shall show that the inverse is of the form  $P(A_++G)P$, where $A_++G$ belongs to Boutet de Monvel's algebra. Though this result might not be new, it follows without effort as a straightforward  corollary from our general approach. 

\section{The general set-up}\label{sec:2}

In this section we describe algebras of ``abstract pseudodifferential operators" and subalgebras of Toeplitz type. In this general set-up we incorporate standard features frequently met in pseudodifferential analysis: ellipticity, parametrix construction, Fredholm property, and homogeneous principal symbols. 

\subsection{Algebras of generalized pseudodifferential operators}\label{sec:2.1}

Let $\nz_0$ denote the set of non-negative integers. 
We shall consider a set of operators 
\begin{equation}\label{eq:pseudo}
 L^\star=\mathop{\mbox{\large$\cup$}}_{-\mu\in\nz_0}
 \mathop{\mbox{\large$\cup$}}_{\bfg\in\mathbf{G}}
 L^\mu(\bfg),
\end{equation}
where $\mu$ represents the ``order of operators" while $\mathbf{G}$ is a set of 
``admissible" pairs of data $\bfg=(g_0,g_1)$; at this stage $\mathbf{G}$ should be considered 
as data specifying the class of considered operators, and to which will be assigned a precise 
meaning depending on the concrete application. For a first example see Example \ref{ex:first} 
below.  

We assume that with any single datum $g$ there is associated a scale of Hilbert spaces 
\begin{equation}\label{eq:sobolev}
 H^s(g),\qquad s\in\nz_0,
\end{equation}
and that each element $A\in L^\mu(\bfg)$, $\bfg=(g_0,g_1)$, induces continuous 
linear operators 
 $$A:H^s(g_0)\lra H^{s-\mu}(g_1)$$
for any $s\ge0$. Furthermore we ask that 
 $$L^\mu(\bfg)\subset L^\nu(\bfg),\qquad\mu\le\nu,$$
and that for two data $\bfg=(g_0,g_1)$ and $\bfg^\prime=(g_1,g_2)$ 
composition of operators induces mappings 
\begin{equation}\label{eq:mult}
 L^\mu(\bfg^\prime)\times L^\nu(\bfg)\lra L^{\mu+\nu}(\bfg^\prime\circ\bfg),
 \qquad \bfg^\prime\circ\bfg:=(g_0,g_2).
\end{equation}
Due to this composition property we also speak -- by abuse of language -- of the ``algebras" 
$L^\star$ or $L^\mu(\bfg)$. The classes of ``smoothing" operators are defined as
 $$L^{-\infty}(\bfg):=\mathop{\mbox{\large$\cap$}}_{-\mu\in\nz_0}L^\mu(\bfg).$$

\begin{example}\label{ex:first}
Let $M$ be a smooth compact $($Riemannian$)$ manifold. 
A ``datum" is any pair $g=(M,E)$ where $E$ is a smooth $($hermitean$)$  
vector bundle over $M$. We let 
 $$H^s(g)=H^s(M,E)$$
denote the standard $L_2$-Sobolev spaces of sections into $E$ of regularity $s$. 
For $\bfg=(g_0,g_1)$ with $g_j=(M,E_j)$ let 
 $$L^\mu(\bfg)=L^\mu_\cl(M;E_0,E_1)$$
be the space of classical pseudodifferential operators, mapping sections into $E_0$ to sections 
into $E_1$.\footnote{With ``classical" we mean that the local pseudodifferential symbols have 
complete asymptotic expansions into homogeneous components; see Section \ref{sec:4} for 
details.} 
\end{example}

A standard feature in pseudodifferential analysis is the possibility of asymptotic summation. 
We incorporate this by the following definition. 

\begin{definition}
We call the algebra $L^\star$ \emph{asymptotically complete} if for any admissible $\bfg$ and any  
sequence of operators $A_j\in L^{-j}(\bfg)$ there exists an operator $A\in L^0(\bfg)$ such that 
$A-\sum\limits_{j=0}^{N-1}A_j\in L^{-N}(\bfg)$ for any positive integer $N$.   
\end{definition} 

Next we come to ellipticity and parametrices. 

\begin{definition}
An operator $A\in L^0(\bfg)$, $\bfg=(g_0,g_1)$, is called elliptic if there exists a 
$B\in L^{0}(\bfg^{(-1)})$ with $\bfg^{(-1)}:=(g_1,g_0)$ such that 
 $$BA-1\in L^{-\infty}(\bfg_0),\qquad AB-1\in L^{-\infty}(\bfg_1),$$
where $\bfg_0=(g_0,g_0)$, $\bfg_1=(g_1,g_1)$. 
Any such operator $B$ is called a parametrix of $A$. 
\end{definition}

Obviously parametrices are uniquely determined modulo smoothing operators. 
As a consequence of a von Neumann series argument the existence of a parametrix 
in case of asymptotic completeness is equivalent to the existence of left- and right-inverses 
modulo operators of order $-1$: The operator $A\in L^\mu(\bfg)$ is elliptic if there exist 
$B_0,B_1\in L^{-\mu}(\bfg^{(-1)})$ such that 
$B_0A-1\in L^{-1}(\bfg_0)$ and $AB_1-1\in L^{-1}(\bfg_1)$. For details see the proof of 
Proposition \ref{prop:complete}, below. 

The fact that parametrices are inverses modulo smoothing operators implies elliptic regularity of associated equations: If $A\in L^0(\bfg)$ is elliptic, $f\in H^s(g_1)$, and $u\in H^0(g_0)$ then $Au=f$ implies $u\in H^s(g_0)$. In applications one is also interested in the fact that the smoothing remainders yield compact operators in the associated spaces, since this implies that elliptic operators are Fredholm operators. Even more, one also wants that the Fredholm property of an operator implies its ellipticity, meaning that the notion of ellipticity is actually optimal. 

\begin{definition}
We say that $L^\star$ has the \emph{Fredholm property} if, for any admissible $\bfg$, 
the following holds: 
\begin{itemize}
 \item[$($a$)$] Any $R\in L^{-\infty}(\bfg)$ is a compact operator 
  $H^0(g_0)\to H^0(g_1)$.
 \item[$($b$)$] If $A\in L^{0}(\bfg)$ is a Fredholm operator 
  $H^0(g_0)\to H^0(g_1)$ then $A$ is elliptic. 
\end{itemize}
\end{definition}

The algebra of classical pseudodifferential operators on a compact manifold, cf.\ Example \ref{ex:first} is asymptotically complete and has the Fredholm property in the sense of the previous definitions. 

\subsection{Toeplitz subalgebras}\label{sec:2.2}

In the following let $\bfg=(g_0,g_1)$ be a weight-datum and $\bfg_0=(g_0,g_0)$, 
$\bfg_1=(g_1,g_1)$. 

\begin{definition}
Let $P_0\in L^0(\bfg_0)$ and $P_1\in L^0(\bfg_1)$ be two projections $($i.e.,  
$P_j^2=P_j)$. Then we define 
 $$T^\mu(\bfg,P_0,P_1):=\Big\{A\in L^\mu(\bfg)\st (1-P_1)A=0,\;A(1-P_0)=0\Big\}$$
and we set 
\begin{equation*}
 T^\star=\mathop{\mbox{\large$\cup$}}_{-\mu\in\nz_0}
 \mathop{\mbox{\large$\cup$}}_{\substack{\bfg\in\mathbf{G},\\ P_j\in L^0(\bfg_j)}}
 T^\mu(\bfg,P_0,P_1). 
\end{equation*} 
\end{definition}

In analogy to classical Toeplitz operators on the circle we call 
$T^\mu(\bfg,P_0,P_1)$ a \emph{Toeplitz subalgebra} of $L^\mu(\bfg)$. We let 
\begin{equation*}
 H^s(g_j,P_j)=P_j\big(H^s(g_j)\big),\qquad s\in\nz_0.
\end{equation*}
Note that $H^s(g_j,P_j)$ is a closed subspace of $H^s(g_j)$ and that any element 
$A\in T^\mu(\bfg,P_0,P_1)$ induces continuous operators 
 $$A:\;H^s(g_0,P_0)\lra H^{s-\mu}(g_1,P_1).$$ 
Observe that the canonical map 
 $$\wt{A}\mapsto P_1\wt{A}P_0:\;L^\mu(\bfg)\lra T^\mu(\bfg,P_0,P_1)$$
is surjective; in other words, we can write 
\begin{equation}\label{eq:talg}
 T^\mu(\bfg,P_0,P_1)=P_1\,L^\mu(\bfg)\,P_0. 
\end{equation}

\begin{definition}\label{def:ell}
An operator $A\in T^0(\bfg,P_0,P_1)$ is called elliptic if there exists a 
$B\in T^{0}(\bfg^{(-1)},P_1,P_0)$ such that 
 $$BA-P_0\in T^{-\infty}(\bfg_0,P_0,P_0),\qquad AB-P_1\in T^{-\infty}(\bfg_1,P_1,P_1).$$
Any such operator $B$ is called a parametrix of $A$. 
\end{definition}

Note that $P_j$ is the identity operator on $H^s(g_j,P_j)$. Therefore ellipticity in fact asks 
for the existence of $B\in T^{0}(\bfg^{(-1)},P_1,P_0)$ and $R_j\in T^{-\infty}(\bfg_j,P_j,P_j)$ 
such that $R_0=BA-1$ on $H^s(g_0,P_0)$ and $R_1=AB-1$ on $H^s(g_1,P_1)$. 

\begin{remark}
Referring to the previously used notation we could introduce new weight-data  
 $$\wh\bfg=(\wh{g}_0,\wh{g}_1):=((g_0,P_0),(g_1,P_1)),\qquad 
     P_j\in L^0(\bfg_j)\text{ projection},$$
and then write $L^\mu(\wh\bfg):=T^\mu(\bfg,P_0,P_1)$. Thus we could use for Toeplitz 
algebras the same formalism as above. However, we find it more intuitive to use the 
notation $T^\mu(\bfg,P_0,P_1)$ in the sequel.
\end{remark}

\section{Ellipticity in Toeplitz subalgebras}\label{sec:3}

We shall investigate how the notion of ellipticity in the full algebra   
$L^0(\bfg)$ descends to a Toeplitz subalgebra $T^0(\bfg,P_0,P_1)$. 
We shall use the notation introduced in Section \ref{sec:2}. 

\subsection{Asymptotic summation and parametrices}\label{sec:3.1}

A first simple observation is that asymptotic completeness passes over to 
Toeplitz subalgebras: 

\begin{lemma}\label{lem:complete}
If $L^\star$ is asymptotically complete then so is $T^\star$.
\end{lemma}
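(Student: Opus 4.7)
The plan is to reduce asymptotic summation in $T^\star$ to asymptotic summation in $L^\star$ using the identity $T^\mu(\bfg,P_0,P_1)=P_1L^\mu(\bfg)P_0$ from \eqref{eq:talg}, together with the fact that sandwiching by $P_0,P_1$ leaves elements of $T^\mu$ unchanged.

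First, given a sequence $A_j\in T^{-j}(\bfg,P_0,P_1)$, regard each $A_j$ as an element of $L^{-j}(\bfg)$ and apply asymptotic completeness of $L^\star$ to obtain $\wt A\in L^0(\bfg)$ with
\[
\wt A-\sum_{j=0}^{N-1}A_j\in L^{-N}(\bfg)\qquad\text{for every }N\ge1.
\]
I then set $A:=P_1\wt A P_0$. By \eqref{eq:talg}, $A$ lies in $T^0(\bfg,P_0,P_1)$, and the composition property \eqref{eq:mult} (applied to $P_1\in L^0(\bfg_1)$ and $P_0\in L^0(\bfg_0)$) guarantees that all relevant products inherit the right order.

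Next, the defining relations $(1-P_1)A_j=0$ and $A_j(1-P_0)=0$ give $P_1A_jP_0=A_j$ for each $j$. Therefore
\[
A-\sum_{j=0}^{N-1}A_j
=P_1\wt AP_0-\sum_{j=0}^{N-1}P_1A_jP_0
=P_1\Big(\wt A-\sum_{j=0}^{N-1}A_j\Big)P_0.
\]
The bracketed operator belongs to $L^{-N}(\bfg)$, so the right-hand side is of the form $P_1RP_0$ with $R\in L^{-N}(\bfg)$, hence lies in $T^{-N}(\bfg,P_0,P_1)$ by \eqref{eq:talg}. This gives exactly the asymptotic expansion required for $T^\star$.

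There is no real obstacle here; the only points to check are that $P_1\wt A P_0$ has the correct order (which follows from the order-preserving composition \eqref{eq:mult} and the fact that the $P_j$ have order zero) and that sandwiching a smoothing-mod-$N$ remainder by $P_1,P_0$ keeps it smoothing-mod-$N$ in the Toeplitz sense, which is immediate from \eqref{eq:talg}.
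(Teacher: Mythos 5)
Your proof is correct and follows exactly the paper's argument: apply asymptotic completeness of $L^\star$ to the $A_j$ viewed in $L^{-j}(\bfg)$, set $A=P_1\wt{A}P_0$, and use $P_1A_jP_0=A_j$ to pull the projections inside the remainder. The paper's proof is a slightly terser version of the same reasoning.
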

\begin{proof}
Let $A_j\in T^{-j}(\bfg,P_0,P_1)$ be a given sequence of operators. 
Then there exists an $\wt{A}\in L^0(\bfg)$ with 
$\wt{A}-\sum\limits_{j=0}^{N-1}A_j\in L^{-N}(\bfg)$
for any $N$. Choosing $A=P_1\wt{A}P_0$ 
we have ${A}-\sum\limits_{j=0}^{N-1}A_j\in T^{-N}(\bfg,P_0,P_1)$,
since $A_j=P_1A_jP_0$ for any $j$. 
\end{proof}

\begin{proposition}\label{prop:complete}
Let $L^\star$ be asymptotically complete. Then for $A\in T^0(\bfg,P_0,P_1)$ 
the following statements are equivalent:
\begin{itemize}
 \item[$(1)$] $A$ is elliptic. 
 \item[$(2)$] There exist $B_0,B_1\in T^{0}(\bfg^{(-1)},P_1,P_0)$ such that 
  $B_0A-P_0\in T^{-1}(\bfg_0,P_0,P_0)$ and $AB_1-P_1\in T^{-1}(\bfg_1,P_1,P_1)$. 
 \item[$(3)$] There exist $C_0,C_1\in L^{0}(\bfg^{(-1)})$ such that 
  $C_0A-P_0\in L^{-1}(\bfg_0)$ and $AC_1-P_1\in L^{-1}(\bfg_1)$. 
\end{itemize}
\end{proposition}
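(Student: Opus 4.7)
The chain $(1) \Rightarrow (2) \Rightarrow (3)$ is immediate from the inclusions $T^{-\infty}(\bfg,P_0,P_1) \subset T^{-1}(\bfg,P_0,P_1) \subset L^{-1}(\bfg)$ and the analogous ones for $\bfg_0$ and $\bfg_1$. The content of the proposition therefore lies in $(3) \Rightarrow (2)$ and $(2) \Rightarrow (1)$.

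For $(3) \Rightarrow (2)$, the plan is to simply project the $C_j$ into the Toeplitz subalgebra by setting
\[
 B_j := P_0 C_j P_1 \in T^{0}(\bfg^{(-1)},P_1,P_0), \qquad j=0,1.
\]
Using the defining identities $P_1 A = A$ and $A P_0 = A$ (which hold because $A \in T^0(\bfg,P_0,P_1)$), a one-line manipulation yields
\[
 B_0 A - P_0 \;=\; P_0 C_0 A P_0 - P_0 \;=\; P_0 (C_0 A - P_0) P_0,
\]
and the sandwich between two $P_0$'s makes the two Toeplitz conditions $(1-P_0)(\cdot)=0$ and $(\cdot)(1-P_0)=0$ manifest, so this element lies in $T^{-1}(\bfg_0, P_0, P_0)$. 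The treatment of $AB_1 - P_1 \in T^{-1}(\bfg_1, P_1, P_1)$ is symmetric.

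For $(2) \Rightarrow (1)$, the plan is a standard von Neumann series, run inside $T^\star$ using its asymptotic completeness (Lemma \ref{lem:complete}). Writing $B_0 A = P_0 + R_0$ with $R_0 \in T^{-1}(\bfg_0,P_0,P_0)$, the powers satisfy $R_0^k \in T^{-k}(\bfg_0,P_0,P_0)$, and asymptotic summation produces $S \in T^0(\bfg_0, P_0, P_0)$ with $S - \sum_{k=0}^{N-1}(-R_0)^k \in T^{-N}$ for every $N$, with the convention that the $k=0$ term is $P_0$, the identity of the Toeplitz subalgebra. A telescoping identity
\[
 \sum_{k=0}^{N-1}(-R_0)^k (P_0 + R_0) \;=\; P_0 - (-R_0)^N,
\]
using only $R_0 P_0 = R_0$, then gives $(S B_0) A - P_0 \in T^{-N}$ for every $N$, so that $SB_0 \in T^0(\bfg^{(-1)},P_1,P_0)$ is a left parametrix. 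A symmetric construction starting from $B_1$ yields a right parametrix $B_R$, and the familiar identity $SB_0 \equiv (SB_0)(AB_R) \equiv (SB_0 A)B_R \equiv B_R$ modulo $T^{-\infty}$ -- using $(SB_0) P_1 = SB_0$ and $P_0 B_R = B_R$ -- forces the two parametrices to coincide modulo smoothing and delivers the required two-sided parametrix.

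The only real subtlety is ensuring that every intermediate operator actually lies in the Toeplitz subalgebra of the correct type; but the key identities $P_1 A = A = A P_0$ and $P_j^2 = P_j$, together with the surjectivity~\eqref{eq:talg}, make this bookkeeping essentially automatic, so I do not expect a serious obstacle beyond keeping the projections on the right side of each product.
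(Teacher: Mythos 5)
Your proof is correct and follows essentially the same route as the paper: for $(3)\Rightarrow(2)$ you sandwich with the projections to land back in the Toeplitz subalgebra (exactly the paper's $B_j = P_0 C_j P_1$), and for $(2)\Rightarrow(1)$ you run a formal Neumann series via the asymptotic completeness of $T^\star$ (Lemma \ref{lem:complete}), with only a cosmetic sign difference ($B_0A = P_0 + R_0$ versus the paper's $P_0 - R_0$). The one place where you are slightly more explicit than the paper is the final step — you spell out why a left and a right parametrix coincide modulo $T^{-\infty}$, whereas the paper compresses this to "we can choose $B=B_L$ or $B=B_R$"; this is a welcome clarification, not a different argument.
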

\begin{proof}
The implications $(1)\Rightarrow (2)$ and $(2)\Rightarrow (3)$ are obvious. 

Let us now show that $(3)$ implies $(2)$. If $C_0A-P_0=R_0$ with $R_0\in L^{-1}(\bfg_0)$ 
then multiplication from the left and the right with $P_0$ and the fact that $P_1A=A=AP_0$ 
yield $P_0C_0P_1A-P_0=P_0R_0P_0$. Similarly we get $AP_0C_1P_1-P_1=P_1R_1P_1$ with 
$R_1\in L^{-1}(\bfg_1)$. Thus $(2)$ holds with $B_j=P_0C_jP_1$ for $j=0,1$. 

Finally, assume $(2)$ is true. Hence there is an $R_0\in T^{-1}(\bfg_0,P_0,P_0)$ with 
 $$B_0A=P_0-R_0=(P_0-R_0)P_0.$$ 
Since $T^*$ is asymptotically complete by Lemma \ref{lem:complete}, we can choose an element  
$B_0^\prime\in T^0(\bfg_0,P_0,P_0)$ with 
$B_0^\prime\sim P_0+\sum\limits_{\ell=1}^\infty R_0^\ell$. Then $B_L:=B_0^\prime B_0$ is a 
left-parametrix of $A$, i.e. $B_LA-P_0\in T^{-\infty}(\bfg_0,P_0,P_0)$. Analogously we can 
construct a right-parametrix $B_R$ and then we can choose $B=B_L$ or $B=B_R$. 
\end{proof}

As an immediate consequence of part $(3)$ of the previous Proposition we obtain: 

\begin{corollary}
Let $L^\star$ be asymptotically complete, $\wt{A}\in L^0(\bfg)$ and $Q_j\in L^0(\bfg_j)$ be two 
projections with $P_j-Q_j\in L^{-1}(\bfg_j)$. Then 
$P_1\wt{A}P_0\in T^\mu(\bfg,P_0,P_1)$ is elliptic if, and only if, 
$Q_1\wt{A}Q_0\in T^\mu(\bfg,Q_0,Q_1)$ is elliptic. 
\end{corollary}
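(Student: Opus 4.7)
The plan is to invoke characterization (3) of Proposition \ref{prop:complete}, which reduces ellipticity to the existence of left/right inverses modulo $L^{-1}$ in the ambient algebra, and to exploit the fact that $P_j$ and $Q_j$ agree modulo $L^{-1}(\bfg_j)$.

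First I would observe that
\[
 P_1\wt{A}P_0 - Q_1\wt{A}Q_0 = (P_1-Q_1)\wt{A}P_0 + Q_1\wt{A}(P_0-Q_0) \in L^{-1}(\bfg),
\]
by the multiplicativity of the filtration \eqref{eq:mult} and the hypothesis $P_j-Q_j\in L^{-1}(\bfg_j)$.

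Now suppose $P_1\wt{A}P_0$ is elliptic. By Proposition \ref{prop:complete}(3) there exist $C_0,C_1\in L^{0}(\bfg^{(-1)})$ with $C_0P_1\wt{A}P_0-P_0\in L^{-1}(\bfg_0)$ and $P_1\wt{A}P_0\,C_1-P_1\in L^{-1}(\bfg_1)$. Then the decomposition
\[
 C_0\,Q_1\wt{A}Q_0 - Q_0 = \bigl(C_0 P_1\wt{A}P_0 - P_0\bigr) + C_0\bigl(Q_1\wt{A}Q_0 - P_1\wt{A}P_0\bigr) + (P_0-Q_0)
\]
shows that $C_0\,Q_1\wt{A}Q_0 - Q_0 \in L^{-1}(\bfg_0)$, since each summand lies in $L^{-1}(\bfg_0)$ (the middle one because of the previous paragraph combined with \eqref{eq:mult}). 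An analogous identity yields $Q_1\wt{A}Q_0\,C_1 - Q_1 \in L^{-1}(\bfg_1)$. Applying Proposition \ref{prop:complete}(3) once more, this time to $T^0(\bfg,Q_0,Q_1)$, we conclude that $Q_1\wt{A}Q_0$ is elliptic.

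The converse is obtained by interchanging the roles of the pairs $(P_0,P_1)$ and $(Q_0,Q_1)$. There is no serious obstacle: the whole argument is a short manipulation resting only on the multiplicative structure of the filtration and the equivalence $(1)\Leftrightarrow(3)$ already established in Proposition \ref{prop:complete}.
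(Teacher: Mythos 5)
Your proof is correct and takes exactly the route the paper indicates: the paper presents this corollary as an immediate consequence of Proposition \ref{prop:complete}(3), and your argument simply fills in the (straightforward) algebra showing that the same $C_0,C_1$ witness criterion (3) for both $P_1\wt{A}P_0$ and $Q_1\wt{A}Q_0$ once one notes $P_1\wt{A}P_0-Q_1\wt{A}Q_0\in L^{-1}(\bfg)$ and $P_j-Q_j\in L^{-1}(\bfg_j)$.
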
 

\subsection{The Fredholm property}\label{sec:3.2}

Assume that $L^0(\bfg)$ has the Fredholm property. Since 
$H^0(g_j,P_j)$ is a closed subspace of $H^0(g_j)$ it is clear that smoothing operators from 
$T^{-\infty}(\bfg,P_0,P_1)$ induce compact operators $H^0(g_0,P_0)\to H^0(g_1,P_1)$. 
Therefore elliptic elements from $T^{0}(\bfg,P_0,P_1)$ induce Fredholm operators 
$H^0(g_0,P_0)\to H^0(g_1,P_1)$. For the reverse statement we shall need that the algebras 
are stable under taking adjoints. 

\begin{definition}
We call $L^\star$ \emph{$*$-closed} if for any admissible $\bfg$ and any $A\in L^0(\bfg)$ 
there exists an $A^*\in L^0(\bfg^{(-1)})$ such that $A^*:H^0(g_1)\to H^0(g_0)$ coincides 
with the adjoint of $A:H^0(g_0)\to H^0(g_1)$. 
\end{definition}

Since any $A\in T^0(\bfg,P_0,P_1)$ satisfies $A=P_1AP_0$, taking the adjoint 
in the $*$-closed algebra $L^0(\bfg)$ yields a map 
 $$T^0(\bfg,P_0,P_1)\lra T^0(\bfg^{(-1)},P_1^*,P_0^*).$$
This map preserves Fredholm operators: 

\begin{lemma}\label{lem:adjoint}
Let $A\in T^0(\bfg,P_0,P_1)$ induce a Fredholm operator $A:H^0(g_0,P_0)\to H^0(g_1,P_1)$. 
Then $A^*:H^0(g_1,P_1^*)\to H^0(g_0,P_0^*)$ is also a Fredholm operator. 
\end{lemma}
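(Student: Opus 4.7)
My plan is to reduce the Fredholm property of $A^*|_{W_1}$ on the projected subspace $W_1 := H^0(g_1, P_1^*) = \mathrm{ran}\,P_1^*$ to the classical fact that the Banach-space adjoint of a Fredholm operator is Fredholm. Set $H_j := H^0(g_j)$, $V_j := H^0(g_j,P_j) = \mathrm{ran}\,P_j$, and $W_j := H^0(g_j,P_j^*) = \mathrm{ran}\,P_j^*$. Since $L^\star$ is $*$-closed and $(P_j^*)^2=(P_j^2)^* = P_j^*$, the operator $P_j^*$ is a projection in $L^0(\bfg_j)$, so $W_j$ is a closed subspace of $H_j$; moreover, from $A = P_1 A P_0$ I obtain $A^* = P_0^* A^* P_1^*$, which shows that $A^*$ restricts to a bounded operator $W_1 \to W_0$.

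The key step is to establish a natural duality between $W_j$ and $V_j$. The ambient inner product restricts to a bounded bilinear pairing $\langle\cdot,\cdot\rangle : W_j \times V_j \to \cz$, and this pairing is non-degenerate: if $w \in W_j$ satisfies $w \perp V_j$ in $H_j$, then $w \in (\mathrm{ran}\,P_j)^\perp = \ker P_j^*$, and since also $w \in \mathrm{ran}\,P_j^*$ and $\mathrm{ran}\,P_j^* \cap \ker P_j^* = \{0\}$, we get $w = 0$; the reverse statement is symmetric. Hence the induced map $T_j : W_j \to V_j^*$, $(T_j w)(v) := \langle w,v \rangle$, is bounded and injective. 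For surjectivity, I would take $\phi \in V_j^*$, extend it by zero on $V_j^\perp \subset H_j$ to a continuous functional on all of $H_j$, represent the extension by Riesz as $\langle u,\cdot\rangle$, and verify that $T_j(P_j^* u) = \phi$ using $\phi(v) = \langle u,P_j v\rangle = \langle P_j^* u,v\rangle$ for $v \in V_j$. The open mapping theorem then upgrades $T_j$ to a topological isomorphism.

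Writing $\wt{A} := A|_{V_0}: V_0 \to V_1$, the direct computation $\langle A^* w,v\rangle = \langle w,Av\rangle = (T_1 w)(\wt{A} v) = (\wt{A}^t T_1 w)(v)$ for $w \in W_1$ and $v \in V_0$ shows that $A^*|_{W_1} = T_0^{-1}\,\wt{A}^t\,T_1$, where $\wt{A}^t: V_1^* \to V_0^*$ is the Banach-space adjoint. Since $\wt{A}$ is Fredholm by hypothesis, $\wt{A}^t$ is Fredholm by the classical Banach-space result, and therefore so is $A^*: W_1 \to W_0$. The main technical obstacle is the surjectivity of $T_j$ in the middle step: the projections $P_j$ are not necessarily orthogonal, so $V_j$ and $W_j$ are generally distinct subspaces of $H_j$, and one has to represent functionals on $V_j$ by elements of $W_j$ by composing a Riesz representative with $P_j^*$ rather than by using the orthogonal projection onto $V_j$.
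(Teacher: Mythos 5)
Your proof is correct and follows essentially the same route as the paper: identify the dual of $H^0(g_j,P_j)$ with $H^0(g_j,P_j^*)$ by composing a Riesz representative with $P_j^*$, transport the Banach-space adjoint $\wt{A}^t$ through this identification to obtain $A^*$ on the projected spaces, and then invoke the standard fact that duals of Fredholm operators are Fredholm. The only cosmetic difference is that you extend a functional $\phi\in V_j^*$ by zero on $V_j^\perp$ whereas the paper uses the extension $u\mapsto\phi(P_ju)$; both produce the same element of $W_j$ after applying $P_j^*$, as your injectivity argument for $T_j$ confirms.
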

\begin{proof}
First observe that there is a natural identification of the dual space of $H^0(g_j,P_j)$ with 
$H^0(g_j,P_j^*)$. In fact, any functional $x$ in the dual space $H^0(g_j,P_j)^\prime$ can be 
extended to one in $H^0(g_j)^\prime$ by setting 
 $$\wt{x}(u)=x(P_ju),\qquad u\in H^0(g_j).$$ 
If we denote by $I_j:H^0(g_j)\to H^0(g_j)^\prime$ the standard Riesz isomorphism then 
 $$x\mapsto P_j^*I_j^{-1}\wt{x}:\;H^0(g_j,P_j)^\prime\lra H^0(g_j,P_j^*)$$
is an isomorphism. Under this identification the dual operator 
$A^\prime:H^0(g_1,P_1)^\prime\to H^0(g_0,P_0)^\prime$ corresponds to 
$A^*:H^0(g_1,P_1^*)\to H^0(g_0,P_0^*)$. Now it remains to observe that duals of Fredholm
operators remain being Fredholm operators. 
\end{proof}


\begin{lemma}\label{lem:fred}
Let $X$ and $Y$ be two Hilbert spaces and $T:X\to Y$ be an upper semi-fredholm 
operator, i.e., $T$ has closed range and a finite-dimensional kernel.  
Then $T^*T$ is a Fredholm operator. 
\end{lemma}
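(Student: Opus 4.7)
The plan is to analyze the kernel and range of $T^*T$ separately and show each is well-behaved.

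First, I would observe that $\ker(T^*T)=\ker T$. Indeed, if $T^*Tx=0$ then $\|Tx\|^2=\skp{T^*Tx}{x}=0$, so $Tx=0$; the reverse inclusion is trivial. By hypothesis $\ker T$ is finite-dimensional, so $\ker(T^*T)$ is finite-dimensional.

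Next I would identify the range. The key claim is $\mathrm{ran}(T^*T)=(\ker T)^\perp$, which is closed and of codimension $\dim\ker T<\infty$ in $X$. To prove the claim, I would use that, because $\mathrm{ran}(T)$ is closed in $Y$, the orthogonal decomposition $Y=\mathrm{ran}(T)\oplus\ker(T^*)$ holds. For any $y\in Y$, writing $y=Tx+z$ with $z\in\ker(T^*)$ gives $T^*y=T^*Tx$, hence $\mathrm{ran}(T^*)\subseteq\mathrm{ran}(T^*T)$; the reverse inclusion is obvious. Combining this with the standard identity $\mathrm{ran}(T^*)=(\ker T)^\perp$ (valid whenever $\mathrm{ran}(T)$ is closed, or simply noting that $\mathrm{ran}(T^*)\subseteq(\ker T)^\perp$ automatically and the reverse follows from $(\mathrm{ran}(T^*))^\perp=\ker T$ together with closedness of $\mathrm{ran}(T^*)$ inherited from that of $\mathrm{ran}(T)$) yields the claim.

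Putting the two steps together, $T^*T:X\to X$ has finite-dimensional kernel and closed range of finite codimension, i.e., it is Fredholm (with index zero, though that is not needed here).

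There is essentially no obstacle: the whole argument rests on the Hilbert-space identities $\ker(T^*T)=\ker T$ and $\mathrm{ran}(T^*)=(\ker T)^\perp$, the second of which requires precisely the closedness of $\mathrm{ran}(T)$ supplied by the upper semi-Fredholm hypothesis. The only subtle point is ensuring that $\mathrm{ran}(T^*T)$ genuinely equals, and not merely is contained in, $\mathrm{ran}(T^*)$, which is handled by the orthogonal decomposition argument above.
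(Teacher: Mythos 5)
Your argument is correct and follows essentially the same route as the paper: both identify $\ker(T^*T)=\ker T$, both use the orthogonal decomposition $Y=\mathrm{im}\,T\oplus\ker T^*$ (valid because $\mathrm{im}\,T$ is closed) to conclude $\mathrm{im}\,(T^*T)=\mathrm{im}\,T^*$, and both then read off finite codimension of the range. The only cosmetic difference is that you explicitly identify $\mathrm{im}\,T^*$ with $(\ker T)^\perp$ via the closed range theorem, whereas the paper simply invokes that $T^*$ is lower semi-Fredholm; your version makes the closedness of the range of $T^*T$ a bit more transparent, but the substance is identical.
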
 
\begin{proof}
$T^*T$ has finite-dimensional kernel, since $\mathrm{ker}\,T^*T=\mathrm{ker}\,T$. 
Since the range of $T$ is closed, we have the orthogonal decomposition 
$Y=\mathrm{im}\,T\oplus \mathrm{ker}\,T^*$. Therefore $\mathrm{im}\,T^*T=\mathrm{im}\,T^*$. 
Since $T^*$ is lower semi-fredholm, its range has finite co-dimension. 
\end{proof}

\begin{theorem}\label{thm:fred}
If $L^\star$ has the Fredholm property and is $*$-closed, 
then $T^\star$ has the Fredholm property. 
\end{theorem}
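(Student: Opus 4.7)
For part (a), any $R \in T^{-\infty}(\bfg,P_0,P_1) \subseteq L^{-\infty}(\bfg)$ is compact as $H^0(g_0)\to H^0(g_1)$ by the Fredholm property of $L^\star$, and therefore restricts to a compact operator $H^0(g_0,P_0)\to H^0(g_1,P_1)$; this is immediate. The real content of the theorem sits in part (b), and there my plan is to build from $A$ and the projections an auxiliary operator in $L^0(\bfg_0)$ that is Fredholm on the full space $H^0(g_0)$, pull back an $L^\star$-parametrix via the Fredholm property of $L^\star$, and extract from it a $T^\star$-parametrix for $A$. The candidate I would try is
\[
 S \;:=\; A^*A + (I-P_0^*)(I-P_0) \;\in\; L^0(\bfg_0).
\]

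To show that $S$ is Fredholm on $H^0(g_0)$, I would exploit the two (distinct) direct-sum decompositions $H^0(g_0) = \mathrm{im}\,P_0 \oplus \mathrm{ker}\,P_0$ for the domain and $H^0(g_0) = \mathrm{im}\,P_0^* \oplus \mathrm{ker}\,P_0^*$ for the codomain. Using $A=P_1AP_0$ and $A^*=P_0^*A^*P_1^*$, together with the fact that $(I-P_0^*)(I-P_0)$ annihilates $\mathrm{im}\,P_0$ with image in $\mathrm{ker}\,P_0^*$, the operator $S$ is block diagonal in this frame. The $\mathrm{ker}\,P_0\to\mathrm{ker}\,P_0^*$ block reduces to $(I-P_0^*)|_{\mathrm{ker}\,P_0}$, a bounded bijection thanks to the Hilbert-space orthogonality $\mathrm{im}\,P_0^* = (\mathrm{ker}\,P_0)^\perp$. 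The $\mathrm{im}\,P_0\to\mathrm{im}\,P_0^*$ block is Fredholm via Lemma~\ref{lem:fred}: applying that lemma to $A$ regarded as an upper semi-Fredholm map $H^0(g_0,P_0)\to H^0(g_1,P_1)$ yields a Fredholm composition on $H^0(g_0,P_0)$ which factors as $\Pi_0 \circ S|_{\mathrm{im}\,P_0}$ with $\Pi_0$ the orthogonal projection onto $H^0(g_0,P_0)$, and the restriction $\Pi_0|_{\mathrm{im}\,P_0^*}\colon \mathrm{im}\,P_0^* \to \mathrm{im}\,P_0$ is itself an isomorphism by the parallel orthogonality $\mathrm{ker}\,P_0^* = (\mathrm{im}\,P_0)^\perp$.

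Once $S$ is Fredholm, the Fredholm property of $L^\star$ supplies $B_0\in L^0(\bfg_0)$ with $B_0S-I\in L^{-\infty}(\bfg_0)$. Setting $C_0 := B_0 A^* \in L^0(\bfg^{(-1)})$ and expanding $A^*A = S-(I-P_0^*)(I-P_0)$, combined with the crucial identity $(I-P_0^*)(I-P_0)u = Su$ for $u\in\mathrm{ker}\,P_0$, a short computation gives $C_0 A = P_0 + R_0 P_0$ modulo $L^{-\infty}$. Hence $B_L := P_0 C_0 P_1 \in T^0(\bfg^{(-1)},P_1,P_0)$ is a left-parametrix of $A$ in $T^\star$. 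A right-parametrix is obtained by running the same construction on $A^*$, whose Fredholm-ness between the adjoint Toeplitz subspaces is exactly Lemma~\ref{lem:adjoint}, and the two are glued into a two-sided parametrix modulo $T^{-\infty}$ by the standard reshuffling $B_L \equiv B_L A B_R \equiv B_R$.

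The main obstacle is the first step, namely verifying that $S$ is Fredholm on the full space $H^0(g_0)$: this rests on carefully combining the two non-matching direct-sum decompositions with the Hilbert-space orthogonality identities and with Lemma~\ref{lem:fred}. Once that Fredholm-ness is in hand, everything downstream is routine bookkeeping.
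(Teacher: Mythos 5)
Your plan is correct and follows the paper's overall strategy: form $S=A^*A+(1-P_0)^*(1-P_0)\in L^0(\bfg_0)$ (this is exactly the operator $B$ of \eqref{eq:opB}), prove it is Fredholm on the full space $H^0(g_0)$, invoke the Fredholm property of $L^\star$ to obtain an $L^\star$-parametrix $B_0$, cut down to $B_L:=P_0B_0A^*P_1\in T^0(\bfg^{(-1)},P_1,P_0)$ using $(1-P_0)P_0=0$ and $P_1A=AP_0=A$, and then produce a right-parametrix by applying the same construction to $A^*$ (which is Fredholm on the dual Toeplitz subspaces by Lemma~\ref{lem:adjoint}) and taking adjoints. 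The one point where you genuinely diverge from the paper is the argument that $S$ is Fredholm. The paper introduces a single auxiliary operator $Tu=(Au,(1-P_0)u)$ on the full spaces, checks that it is upper semi-Fredholm because $\ker T=\ker\wh{A}$ and $\im T=\im\wh{A}\oplus\im(1-P_0)$ is closed, and then gets $S=T^*T$ Fredholm in one application of Lemma~\ref{lem:fred}. You instead block-decompose $S$ with respect to the two complementary (but non-orthogonal) splittings $\im P_0\oplus\ker P_0$ in the source and $\im P_0^*\oplus\ker P_0^*$ in the target, show the $\ker P_0\to\ker P_0^*$ block is an isomorphism via $\im P_0^*=(\ker P_0)^\perp$, and handle the $\im P_0\to\im P_0^*$ block by applying Lemma~\ref{lem:fred} to $\wh{A}$ together with the isomorphism $\Pi_0|_{\im P_0^*}:\im P_0^*\to\im P_0$ (using $\ker P_0^*=(\im P_0)^\perp$). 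Both are sound; the paper's $T^*T$ device is more economical, dispensing entirely with the block bookkeeping and the auxiliary orthogonality identities, whereas yours makes the role of the Fredholmness of $\wh{A}$ on the Toeplitz subspaces somewhat more visible. A minor narrative slip worth noting: in the parametrix extraction the identity you call ``crucial,'' namely $(1-P_0^*)(1-P_0)u=Su$ on $\ker P_0$, is not what the computation uses; what annihilates the extra term after right-multiplying by $P_0$ is simply $(1-P_0)P_0=0$, exactly as in the paper's $P_0RP_0$ calculation.
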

\begin{proof}
Let $A\in T^0(\bfg,P_0,P_1)$ induce a Fredholm operator 
$H^0(g_0,P_0)\to H^0(g_1,P_1)$; let us denote this operator by $\wh{A}$. 
We have to show that $A$ is elliptic. 
Due to the $*$-closedness of $L^\star$ we have
\begin{equation}\label{eq:opB} 
 B:=A^*A+(1-P_0)^*(1-P_0)\in L^0(\bfg_0).
\end{equation}
We shall now show that $B:H^0(g_0)\to H^0(g_0)$ is a Fredholm operator. To this end define 
 $$T:H^0(g_0)\lra H^0(g_1)\oplus H^0(g_0),\qquad 
   Tu=\big(Au,(1-P_0)u\big).$$
Now $\mathrm{ker}\,T=\mathrm{ker}\,\wh{A}$, and 
$\mathrm{im}\,T=\mathrm{im}\,\wh{A}\oplus \mathrm{im}\,(1-P_0)$ is closed in 
$H^0(g_1)\oplus H^0(g_0)$, i.e., $T$ is upper semi-fredholm. 
Due to Lemma \ref{lem:fred}, $B=T^*T$ is fredholm. 

Since $L^\star$ has the Fredholm property, $B$ has a parametrix $C\in L^0(\bfg_0)$, i.e., 
$R:=CB-1\in L^{-\infty}(\bfg_0)$. Therefore  
\begin{align*}
 P_0RP_0=P_0C\big(A^*A+(1-P_0)^*(1-P_0)\big)P_0-P_0=P_0CA^*P_1A-P_0,
\end{align*}
since $P_1A=AP_0=A$. Therefore $B_L:=P_0CA^*P_1\in T^0(\bfg^{(-1)},P_1,P_0)$ is a left-parametrix 
of $A$. 

In view of Lemma \ref{lem:adjoint} we can construct in the same way a left-parametrix to 
$A^*\in T^0(\bfg^{(-1)},P_1^*,P_0^*)$. The adjoint of this left-parametrix yields a 
right-parametrix for $A$. Hence $A$ is elliptic. 
\end{proof}

\begin{remark}\label{rem:opB}
For later purpose let us state here that the operator $B$ defined in \eqref{eq:opB} is a 
self-adjoint Fredholm operator with 
 $$\mathrm{ker}\,\big(B:H^0(g_0)\to H^0(g_0)\big)=
   \mathrm{ker}\,\big(A:H^0(g_0,P_0)\to H^0(g_1,P_1)\big).$$
In fact, the kernel on the right-hand side is clearly contained in the kernel on the left-hand 
side. Moreover, $Bu=0$ implies  
 $$0=(Bu,u)=(A^*Au,u)+((1-P_0)^*(1-P_0)u,u)=\|Au\|^2+\|(1-P_0)u\|^2,$$
where inner-product and norm are those of $H^0(g_0)$. Thus $Au=0$ and $(1-P_0)u=0$. 
In particular, $B$ is an isomorphism if $A:H^0(g_0,P_0)\to H^0(g_1,P_1)$ is injective. 
\end{remark}

Another interesting property of many pseudodifferential calculi is their ``spectral invariance", i.e., whenever an element of the algebra is invertible as a continuous operator between Sobolev spaces, then the inverse belongs to the calculus.  

\begin{theorem}\label{thm:spectral}
Let $L^\star$ have the Fredholm property and be $*$-closed. Furthermore assume that $R_1TR_0\in L^{-\infty}(\bfg)$ whenever $\bfg$ is admissible, $R_j\in L^{-\infty}(\bfg_j)$ are smoothing operators, and $T:H^0(g_0)\to H^0(g_1)$ continuously. Then $T^0(\bfg,P_0,P_1)$ is spectrally invariant for any admissible $\bfg$, i.e., if $A\in T^0(\bfg,P_0,P_1)$ induces an isomorphism  $H^0(g_0,P_0)\to H^0(g_1,P_1)$ then there exists a $B\in T^0(\bfg^{(-1)},P_1,P_0)$ such that $BA=P_0$ and $AB=P_1$.  
\end{theorem}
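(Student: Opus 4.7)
The plan is to combine Theorem~\ref{thm:fred} with a single algebraic manipulation, using the extra hypothesis of the theorem to absorb a leftover term involving a merely bounded operator.

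Since an isomorphism is in particular Fredholm, Theorem~\ref{thm:fred} yields an elliptic parametrix $B_0\in T^0(\bfg^{(-1)},P_1,P_0)$ with remainders $R_0:=B_0A-P_0\in T^{-\infty}(\bfg_0,P_0,P_0)$ and $R_1:=AB_0-P_1\in T^{-\infty}(\bfg_1,P_1,P_1)$. I would then work with the continuous extension $B'\colon H^0(g_1)\to H^0(g_0)$ of the inverse of $A$, defined by $B'u=A^{-1}(P_1u)$, where $A^{-1}$ denotes the inverse of the isomorphism $A\colon H^0(g_0,P_0)\to H^0(g_1,P_1)$; continuity is automatic by the open mapping theorem. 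A direct check using $A=P_1AP_0$ shows $B'=P_0B'P_1$, as well as $B'A=P_0$ on $H^0(g_0)$ and $AB'=P_1$ on $H^0(g_1)$.

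Next I would substitute $P_1=AB_0-R_1$ into the identity $B'=B'P_1$, using $B'A=P_0$ together with $P_0B_0=B_0$, to obtain $B'=B_0-B'R_1$; dually, substituting $P_0=B_0A-R_0$ into $B'=P_0B'$ yields $B'=B_0-R_0B'$. Feeding the first identity into the second to eliminate the inner $B'$ produces
\[
  B' \;=\; B_0 - R_0 B_0 + R_0 B' R_1.
\]
The term $R_0B_0$ lies in $L^{-\infty}(\bfg^{(-1)})$ by the composition property of the algebra, and $R_0B'R_1$ lies in $L^{-\infty}(\bfg^{(-1)})$ by the extra hypothesis of the theorem, applied to the bounded operator $B'\colon H^0(g_1)\to H^0(g_0)$ sandwiched between the smoothing operators $R_0$ and $R_1$. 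Consequently $B'\in L^0(\bfg^{(-1)})$, and since $B'=P_0B'P_1$ we in fact have $B'\in T^0(\bfg^{(-1)},P_1,P_0)$. Taking $B:=B'$ gives the desired two-sided inverse.

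The only genuine obstacle is recognising that the extra hypothesis of the theorem is tailored precisely to handle the term $R_0B'R_1$: a priori $B'$ is merely a bounded operator with no pseudodifferential structure, and it is only once both ends are absorbed by smoothing operators that it lands back in the calculus. Everything else is the same short algebraic shuffle already used in the proofs of Proposition~\ref{prop:complete} and Theorem~\ref{thm:fred}.
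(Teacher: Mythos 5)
Your proof is correct and follows essentially the same approach as the paper: apply Theorem~\ref{thm:fred} to obtain a parametrix, use the two one-sided parametrix identities to derive the Neumann-type decomposition $B' = B_0 - R_0B_0 + R_0B'R_1$, and invoke the extra hypothesis to absorb the merely-bounded middle factor in $R_0B'R_1$ into $L^{-\infty}(\bfg^{(-1)})$. The only differences from the paper's proof are cosmetic (an opposite sign convention for the remainders and the explicit verification that $B'=P_0B'P_1$, $B'A=P_0$, $AB'=P_1$, which the paper leaves implicit).
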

\begin{proof}
Let $A$ be as stated. In particular, $A:H^0(g_0,P_0)\to H^0(g_1,P_1)$ is a Fredholm operator. By Theorem \ref{thm:fred} there exists a parametrix $B\in T^0(\bfg^{(-1)},P_1,P_0)$. In particular, $BA=1-R_0$ on $H^0(g_0,P_0)$ and $AB=1-R_1$ on $H^0(g_1,P_1)$ with smoothing operators $R_j\in T^{-\infty}(\bfg_j,P_j)$. These identities yield $A^{-1}=B+R_0A^{-1}$ and $A^{-1}=B+A^{-1}R_1$. Thus we get 
 $$A^{-1}=B+R_0B+R_0P_0A^{-1}P_1R_1.$$
The right-hand side belongs to $T^0(\bfg^{(-1)},P_1,P_0)$. In fact, we can consider $T:=P_0A^{-1}P_1$ as a continuous map $H^0(g_1)\to H^0(g_0)$, hence  
$S:=R_0TR_1\in L^{-\infty}(\bfg^{(-1)})$ by assumption. Moreover $P_1S=SP_0=0$, showing 
$S\in T^{-\infty}(\bfg^{(-1)},P_1,P_0)$. 
\end{proof}

By abuse of notation we shall also write $A^{-1}:=B$ for $B$ from the previous theorem. This notation is reasonable, since $B:H^0(g_1,P_1)\to H^0(g_0,P_0)$ is the inverse of $A:H^0(g_0,P_0)\to H^0(g_1,P_1)$. 

\subsection{Reductions of orders}\label{sec:3.3}

In applications typically the fitration in \eqref{eq:pseudo} uses a parameter $\mu\in\gz$ or  $\mu\in\rz$ and the scale of Sobolev spaces \eqref{eq:sobolev} admits regularities $s\in\gz$ or $s\in\rz$. Of course, one is also interested in operators of order different from zero. A typical feature in pseudodifferential calculi is the existence of ``reductions of orders", which allows to restrict ones attention to the zero order case. In the present general setting this means $($to ask for$)$ the existence of operators $S_\mu\in L^\mu(\bfg)$ having an inverse 
$S_\mu^{-1}=S_{-\mu}\in L^{-\mu}(\bfg)$, for any $\mu$ and any admissible $\bfg=(g,g)$. 

In case of existence of such reductions of orders, $S_{\mu}^j\in L^\mu(\bfg_j)$, 
the study of $A\in T^\mu(\bfg,P_0,P_1)$ considered as an operator 
 $$A:H^s(g_0,P_0)\lra H^{s-\mu}(g_1,P_1)$$
is equivalent to the study of 
 $$\wt{A}:H^0(g_0,\wt{P}_0)\lra H^{0}(g_1,\wt{P}_1)$$
where 
 $$\wt{A}=S_{s-\mu}^1A S_{-s}^0\in L^0(\bfg,\wt{P}_0,\wt{P}_1)$$
with the two projections 
 $$\wt{P}_0=S_{s}^0P_0S_{-s}^0\in L^0(\bfg_0),\qquad 
   \wt{P}_1=S_{s-\mu}^1P_1S_{\mu-s}^1\in L^0(\bfg_1).$$
In fact, the following diagram is commutative: 
 $$
   \begin{CD}
     H^{s}(g_0,P_0) @> A >> H^{s-\mu}(g_1,P_1)\\
     @A{S_{-s}^0}AA @VV{S_{s-\mu}^1}V\\
     H^{0}(g_0,\wt{P}_0) @> \wt{A} >> H^{0}(g_1,\wt{P}_1)
   \end{CD}
 $$

\subsection{Ellipticity and principal symbols}\label{sec:3.4}

Above, ellipticity has been \emph{defined} as the existence of a parametrix, i.e., an inverse modulo smoothing remainders. In applications it is of course desirable to characterise ellipticity in other terms that are easier to verify. Typically, with a given operator $A$ one associates one or more ``$($homogeneous$)$ principal symbols", which can be thought of bundle morphisms between  finite or infinite dimensional vector bundles.\footnote{We shall identify operator-valued functions $\sigma:M\to\scrL(X,Y)$ with morphism acting between the trivial vector-bundles $M\times X$ and $M\times Y$.} Ellipticity is then aimed to be equivalent to the invertibility/bijectivity of the principal symbols. 

\begin{example}\label{ex:second}
With a classical pseudodifferential operator $A\in L^\mu_\cl(M;E_0,E_1)$, 
cf.\ Example \textrm{\ref{ex:first}}, we associate its homogeneous principal symbol, 
which is a vector bundle morphism
 $$\sigma^\mu(A):\;\pi^*E_0\lra \pi^*E_1,$$
where $\pi:S^*M\to M$ is the canonical projection of the unit co-sphere bundle $S^*M$ of the 
$($Riemannian$)$ manifold $M$ onto $M$ itself, and $\pi^*E_j$ denotes the pull-back of the vector 
bundle $E_j$. $A$ is elliptic if, and only if, $\sigma^\mu(A)$ is an isomorphism.  
\end{example}

In this section we assume that in $L^\star$ we have such a characterisation of ellipticity in 
terms of principal symbols and investigate how this structure descends to Toeplitz subalgebras. To 
this end let us call $L^\star$ a \emph{$\sigma$-algebra} if there exists a map 
 $$A\mapsto\sigma(A)=\big(\sigma_1(A),\ldots,\sigma_n(A)\big)$$
assigning to each $A\in L^0(\bfg)$ an $n$-tuple of bundle morphisms 
 $$\sigma_\ell(A):\;E_\ell(g_0)\lra E_\ell(g_1)$$
between $($finite or infinite dimensional$)$ Hilbert space bundles $E_\ell(g_j)$, such that the 
following is true:  
\begin{itemize}
 \item[$(1)$] The map respects the composition of operators, i.e., 
   $$\sigma(AB)=\sigma(A)\sigma(B)
     :=\big(\sigma_1(A)\sigma_1(B),\ldots,\sigma_n(A)\sigma_n(B)\big)$$
  whenever $A\in L^0(\bfg)$ and $B\in L^0(\bfg^\prime)$ as in \ref{eq:mult}.
 \item[$(2)$] $\sigma(R)=0$ for any smoothing operator $R$. 
 \item[$(3)$] $A$ is elliptic if, and only if, $\sigma(A)$ is invertible, i.e., 
  all $\sigma_\ell(A)$ are bundle isomorphisms. 
\end{itemize}
If additionally $L^\star$ is $*$-closed we also ask that  
\begin{itemize}
 \item[$(4)$] $\sigma(A^*)=\sigma(A)^*$, i.e., for any $\ell$, 
   $$\sigma_\ell(A^*)=\sigma_\ell(A)^*:\;E_1^\ell(g_1)\lra E_0^\ell(g_0),$$  
  where $\sigma_\ell(A)^*$ denotes the adjoint morphism $($obtained by taking fibrewise 
  the adjoint$)$. 
\end{itemize}

\begin{definition}
Let $L^\star$ be a $\sigma$-algebra and $A\in T^0(\bfg,P_0,P_1)$. 
Since the $P_j$ are projections also the associated bundle morphisms 
$\sigma_\ell(P_j)$ are projections in $E_\ell(g_j)$. Therefore its range  
 $$E_\ell(g_j,P_j):=\sigma_\ell(P_j)\big(E_\ell(g_j)\big)$$
is a subbundle of $E_\ell(g_j)$. We now define 
 $${\sigma}_\ell(A,P_0,P_1):E_\ell(g_0,P_0)\lra E_\ell(g_1,P_1)$$
by restriction of $\sigma_\ell(A)$ and then 
 $$\sigma(A,P_0,P_1)=\big(\sigma_1(A,P_0,P_1),\ldots,\sigma_n(A,P_0,P_1)\big).$$
\end{definition}

It is clear that if $A\in T^0(\bfg,P_0,P_1)$ is elliptic, then ${\sigma}(A,P_0,P_1)$ is invertible. In fact, if $B\in T^0(\bfg^{(-1)},P_1,P_0)$ is a parametrix to $A$ then ${\sigma}(B,P_1,P_0)$ is the inverse of ${\sigma}(A,P_0,P_1)$. 

\begin{theorem}\label{thm:sigma}
Let $L^\star$ be a $*$-closed $\sigma$-algebra. Then $T^\star$ is a $\sigma$-algebra. 
In particular, for $A\in T^0(\bfg,P_0,P_1)$ the following statements are equivalent: 
\begin{itemize}
 \item[$($a$)$] $A$ is elliptic. 
 \item[$($b$)$] ${\sigma}_\ell(A,P_0,P_1):E_\ell(g_0,P_0)\lra E_\ell(g_1,P_1)$ is an isomorphism 
  for $\ell=1,\ldots,n$.  
\end{itemize}
\end{theorem}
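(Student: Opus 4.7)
The plan is to promote the symbol assignment on $L^\star$ to one on $T^\star$ by restricting to the subbundles $E_\ell(g_j,P_j)=\sigma_\ell(P_j)(E_\ell(g_j))$, verify the four structural axioms of a $\sigma$-algebra, and then use the device from the proof of Theorem \ref{thm:fred} to turn a symbolic inverse into an operator-level parametrix. The axioms (1), (2), (4) are inherited essentially verbatim from $L^\star$: smoothing remains smoothing and, using $(P_1\wt{A}P_0)^*=P_0^*\wt{A}^*P_1^*$ together with (4) for $L^\star$, adjoints of Toeplitz elements belong to the Toeplitz subalgebra with swapped projections. For composition, if $A\in T^0(\bfg,P_0,P_1)$ and $B\in T^0(\bfg',P_1,P_2)$, the factorization $\sigma_\ell(A)=\sigma_\ell(P_1)\sigma_\ell(A)\sigma_\ell(P_0)$ shows that $\sigma_\ell(A)$ sends $E_\ell(g_0,P_0)$ into $E_\ell(g_1,P_1)$, on which $\sigma_\ell(B)$ restricts to $\sigma_\ell(B,P_1,P_2)$, giving the composition law for the restricted symbols. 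The direction (a)$\Rightarrow$(b) is then immediate: applying $\sigma$ to $BA-P_0\in T^{-\infty}$ gives $\sigma(B)\sigma(A)=\sigma(P_0)$, and restricting to $E_\ell(g_0,P_0)$, where $\sigma_\ell(P_0)$ acts as the identity, yields $\sigma_\ell(B,P_1,P_0)\,\sigma_\ell(A,P_0,P_1)=\mathrm{id}$; the same reasoning on $AB-P_1\in T^{-\infty}$ handles the other side.

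For the nontrivial direction (b)$\Rightarrow$(a) I mimic the proof of Theorem \ref{thm:fred} at the symbolic level. Set
 $$B:=A^*A+(1-P_0)^*(1-P_0)\in L^0(\bfg_0),$$
with principal symbol $\sigma(B)=\sigma(A)^*\sigma(A)+(1-\sigma(P_0))^*(1-\sigma(P_0))$. The key claim is that $\sigma(B)$ is an invertible bundle morphism. Fiberwise injectivity is easy: pairing $\sigma(B)v=0$ with $v$ yields $\|\sigma(A)v\|^2+\|(1-\sigma(P_0))v\|^2=0$, so $v\in E_\ell(g_0,P_0)$ and $\sigma_\ell(A,P_0,P_1)v=0$, whence $v=0$ by (b). For full invertibility one writes $\sigma(B)=T_\ell^*T_\ell$ with $T_\ell v=(\sigma(A)v,(1-\sigma(P_0))v)$ and decomposes $v=\sigma(P_0)v+(1-\sigma(P_0))v$; the hypothesis that $\sigma_\ell(A,P_0,P_1)$ is a bundle isomorphism supplies a fiberwise bound $\|T_\ell v\|\ge c\|v\|$, so $\sigma(B)=T_\ell^*T_\ell$ is a self-adjoint bijection. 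By property (3) for $L^\star$, $B$ is elliptic; pick a parametrix $C\in L^0(\bfg_0)$ with $R:=CB-1\in L^{-\infty}(\bfg_0)$. Sandwiching with $P_0$ and using $A=P_1A=AP_0$ yields
 $$(P_0CA^*P_1)A-P_0=P_0RP_0\in T^{-\infty}(\bfg_0,P_0,P_0),$$
so $P_0CA^*P_1\in T^0(\bfg^{(-1)},P_1,P_0)$ is a left parametrix for $A$. A right parametrix is produced by the same argument applied to $A^*\in T^0(\bfg^{(-1)},P_1^*,P_0^*)$, whose symbol is invertible by property (4), and then taking adjoints.

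The main obstacle is the passage from fiberwise injectivity of $\sigma(B)$ to its invertibility as a bundle morphism. In the finite-dimensional examples of Sections \ref{sec:4}--\ref{sec:5} self-adjointness plus injectivity is automatic, but in the infinite-dimensional boundary-symbol setting of Boutet de Monvel's algebra (Section \ref{sec:6}) the uniform fiberwise coercivity of $T_\ell$, which ultimately rests on the fiberwise boundedness of $\sigma_\ell(A,P_0,P_1)^{-1}$ along the base, is what genuinely uses hypothesis (b).
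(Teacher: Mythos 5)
Your proof follows the same route as the paper: both define $B:=A^*A+(1-P_0)^*(1-P_0)$, pass to the symbol level, show $\sigma_\ell(B)$ is fibrewise invertible so that $B$ is elliptic in $L^\star$, and then recover a left parametrix $P_0CA^*P_1$ and a right parametrix via the adjoint, exactly as in Theorem \ref{thm:fred}. The only cosmetic difference is that you verify fibrewise invertibility of $\sigma_\ell(B)=T_\ell^*T_\ell$ by a coercivity bound, whereas the paper simply invokes Remark \ref{rem:opB} (self-adjoint Fredholm plus injective) applied fibrewise; these are two equivalent ways of phrasing the same step, so the argument is correct and substantively identical. (A minor aside: your closing remark labels Section \ref{sec:5} as finite-dimensional, but the conormal symbol of the cone algebra acts between infinite-dimensional Sobolev spaces on $X$, so the subtlety you flag for the boundary symbol is already present there.)
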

\begin{proof}
Let us show that $($b$)$ implies $($a$)$ $($the remaining statements are simple to see$)$. 
Let us define the operator 
 $$B:=A^*A+(1-P_0)^*(1-P_0)\in L^0(\bfg_0).$$
Applying the principal symbol map yields 
 $$\sigma_\ell(B)=\sigma_\ell(A)^*\sigma_\ell(A)+\sigma_\ell(1-P_0)^*\sigma_\ell(1-P_0).$$
Remark \ref{rem:opB} $($applied fibrewise$)$ shows that any $\sigma_\ell(B)$ is an isomorphism, 
hence $B$ is elliptic by assumption. Arguing as in the proof of Theorem \ref{thm:fred} we 
find a left-parametrix to $A$ and, by passing to adjoints, also a right-parametrix. 
Thus $A$ is elliptic. 
\end{proof}

The previous proof also shows that, under the assumptions of Theorem {\ref{thm:sigma}}, 
a left-parametrix $[$right-parametrix$]$ for $A$ exists, provided  
${\sigma}_\ell(A,P_0,P_1):E_\ell(g_0,P_0)\lra E_\ell(g_1,P_1)$ are (fibrewise) Fredholm 
monomorphisms $[$epimorphisms$]$ for all $\ell=1,\ldots,n$.  

\begin{example}\label{ex:third}
Let $P_j\in L^0_\cl(M;E_j,E_j)$ be two projections. Let us write 
 $$T^0_\cl(M;(E_0,P_0),(E_1,P_1))=P_1\, L^0_\cl(M;E_0,E_1)\,P_0$$
for the associated Toeplitz algbra and 
 $$H^s(M,E_j,P_j)=P_j\big(H^s(M,E_j)\big).$$ 
The principal symbol $\sigma(P_j):\,\pi^*E_j\to \pi^*E_j$ is a projection, and its range 
is a subbundle of $\pi^*E_j$ which we denote by $E_j(P_j)$. Then for 
$A\in T^0_\cl(M,(E_0,P_0),(E_1,P_1))$ the following statements are equivalent: 
\begin{itemize}
 \item[$($a$)$] $A$ is elliptic, i.e., has a parametrix $B\in T^0_\cl(M;(E_1,P_1),(E_0,P_0))$. 
 \item[$($b$)$] $A:H^0(M,E_0,P_0)\to H^0(M,E_1,P_1)$ is a Fredholm operator. 
 \item[$($c$)$] $\sigma(A):E_0(P_0)\to E_1(P_1)$ is an isomorphism. 
\end{itemize}
\end{example}

\begin{example}\label{ex:fourth}
Let $E_0,E_1$ be smooth vector bundles over $M$. By Swan's theorem $E_j$ is a subbundle of a 
trivial bundle; let us denote it by $\cz^{N_j}$. Then the projections $p_j:\cz^{N_j}\to E_j$ 
can be considered as zero-order pseudodifferential projections 
$P_j\in L^0_\cl(M;\cz^{N_j},\cz^{N_j})$. Then we have an identification of $L^\mu_\cl(M;E_0,E_1)$ 
with the Toeplitz subalgebra $P_0\,L^\mu_\cl(M;\cz^{N_0},\cz^{N_1})\,P_1$. 
\end{example}

Similarly as discussed in Section \ref{sec:3.3}, the existence of reductions of orders allows a straightforward extension of the above result from zero order operators to operators of general order. This fact we shall use below in our examples without further commenting on it. 

\subsection{Parameter-dependent operators}\label{sec:3.5}

For the analysis of resolvents of differential operators, calculi of \emph{parameter-dependent} pseudodifferential operators can be used very effectively. With easy modifications, the above abstract approach can also capture features of parameter-dependent calculi. We shall give some details in this subsection. 

In the following let $\Lambda$ coincide with $\rz^\ell$ or be a sectorial domain in the complex plane. We now assume that the elements of $L^*$ are not single operators, but families/functions of operators $\lambda\mapsto A(\lambda)$. To make clear that we deal with families of operators we shall use notations like $L^*(\Lambda)$, $L^\mu(\bfg;\Lambda)$ and denote elements by $A(\lambda)$, $B(\lambda)$, etc. 

With the notation from Section \ref{sec:2} we shall assume that 
$A(\lambda)\in L^\mu(\bfg;\Lambda)$ induces continuous oprators
 $$A(\lambda):\;H^s(g_0)\lra H^{s-\mu}(g_1)$$ 
for all $\lambda$ and all $s$. 
Smoothing operators $R(\lambda)\in L^{-\infty}(\bfg;\Lambda)$ are required to satisfy  
 $$\|R(\lambda)\|_{H^s(g_0),H^{t}(g_1)}\xrightarrow{|\lambda|\to\infty}0$$
for any $s,t$, where $\|\cdot\|_{X,Y}$ denotes the operator norm for operators $X\to Y$. 

\begin{definition}
$A(\lambda)\in L^0(\bfg;\Lambda)$ is called \emph{elliptic} $($or \emph{parameter-elliptic}$)$  
if there exists a $B(\lambda)\in L^{0}(\bfg^{(-1)};\Lambda)$ such that 
\begin{align*}
 R_0(\lambda)&:=B(\lambda)A(\lambda)-1\in L^{-\infty}(\bfg_0;\Lambda),\\ 
 R_1(\lambda)&:=A(\lambda)B(\lambda)-1\in L^{-\infty}(\bfg_1;\Lambda).
\end{align*}
Any such operator $B(\lambda)$ is called a parametrix of $A(\lambda)$. 
\end{definition}

An elliptic $A(\lambda)$ induces isomorphisms $H^0(g_0)\to H^{0}(g_1)$ for sufficiently 
large $\lambda$, since $1+R_j(\lambda)$ is invertible due to the decay property of smoothing 
remainders. If we assume that there exist smoothing $S_j(\lambda)$ 
such that $(1+R_j(\lambda))^{-1}=1+S_j(\lambda)$ for sufficiently large $\lambda$, 
we can conclude that there exists a parametrix $B(\lambda)$ that equals 
$A(\lambda)^{-1}$ for large enough $\lambda$. 

\begin{definition}
We call $L^\star(\Lambda)$ \emph{inverse-closed}, if for any 
$R(\lambda)\in L^{-\infty}(\bfg;\Lambda)$ with admissible weight $\bfg=(g,g)$ 
there exists an $S(\lambda)\in L^{-\infty}(\bfg;\Lambda)$ such that 
 $$(1+R(\lambda))(1+S(\lambda))=(1+S(\lambda))(1+R(\lambda))=1$$ 
for sufficiently large $\lambda$. 
\end{definition}

Using the notation of the previous definition, we have 
 $$\big(1+R(\lambda)\big)^{-1}=1-R(\lambda)
    +R(\lambda)\big(1+R(\lambda)\big)^{-1}R(\lambda)$$
whenever the inverse exists. 
Hence we see that $L^\star(\Lambda)$ is inverse closed if, and only if, for any such 
$R(\lambda)$ there exists an $R^\prime(\lambda)\in L^{-\infty}(\bfg;\Lambda)$ such 
that 
 $$R^\prime(\lambda)=R(\lambda)\big(1+R(\lambda)\big)^{-1}R(\lambda)$$
for sufficiently large $\lambda$. 

\begin{example}
With the notation introduced in Example {\rm\ref{ex:first}}, let 
 $$L^\mu(\bfg;\Lambda)=L^\mu_\cl(M;E_0,E_1;\Lambda)$$
be the space of classical parameter-dependent pseudodifferential operators of order 
$\mu$.\footnote{i.e., the local symbols satisfy uniform estimates of the form 
 $$|D^\alpha_\xi D^\beta_x D^\gamma_\lambda a(x,\xi,\lambda)|\le C 
(1+|\xi|+|\lambda|)^{\mu-|\alpha|-|\gamma|},$$ 
and have expansions into components homogeneous in $(\xi,\lambda)$.} 
The smoothing operators are rapidly decreasing in $\lambda$ with values in the smoothing 
operators on $M$, 
 $$L^{-\infty}(M;E_0,E_1;\Lambda)=\scrS(\Lambda,X),\qquad 
     X=L^{-\infty}(M;E_0,E_1)\cong \scrC^\infty(M,E_0\boxtimes E_1).$$
Now it is straightforward to verify the inverse closedness, using the fact that for a rapidly 
decreasing function $r(\lambda)$ also 
 $$r^\prime(\lambda):=\chi(\lambda)r(\lambda)\big(1+r(\lambda)\big)^{-1}r(\lambda)$$
is reapidly decreasing, where $\chi$ is a zero excision function that vanishes where the 
inverse does not exist. 
\end{example}

As before we can now consider Toeplitz algebras 
 $$T^\mu(\bfg,P_0,P_1;\Lambda)=P_1(\lambda)L^\mu(\bfg;\Lambda)P_0(\lambda)$$
with projections $P_j(\lambda)\in L^\mu(\bfg_j;\Lambda)$. An element 
$A(\lambda)\in T^0(\bfg,P_0,P_1;\Lambda)$ is called elliptic if there exists a
$B(\lambda)\in T^0(\bfg,P_1,P_0;\Lambda)$ such that 
\begin{align*}
 B(\lambda)A(\lambda)-P_0(\lambda)&\in T^{-\infty}(\bfg_0,P_0,P_0;\Lambda),\\ 
 A(\lambda)B(\lambda)-P_1(\lambda)&\in T^{-\infty}(\bfg_1,P_1,P_1;\Lambda).
\end{align*}
Inverse-closedness of the Toeplitz algebras now means that to any projection 
$P(\lambda)\in L^\mu(\bfg;\Lambda)$, $\bfg=(g,g)$, and any 
$R(\lambda)\in T^{-\infty}(\bfg,P,P;\Lambda)$ there exists an 
$S(\lambda)\in T^{-\infty}(\bfg,P,P;\Lambda)$ such that 
 $$(P(\lambda)+R(\lambda))(P(\lambda)+S(\lambda))
     =(P(\lambda)+S(\lambda))(P(\lambda)+R(\lambda))=P(\lambda)$$ 
for sufficiently large $\lambda$. In this case, to any elliptic $A(\lambda)$ there always exists 
a parametrix $B(\lambda)$ such that  
 $$B(\lambda)A(\lambda)=P_0(\lambda),\qquad  
   A(\lambda)B(\lambda)=P_1(\lambda)$$
for large $\lambda$. It is not difficult to see that inverse-closedness of $L^\star(\Lambda)$ implies that of 
$T^\star(\Lambda)$. 

Assuming that ellipticity in $L^\star(\Lambda)$ is characterized by the invertibility of certain principal 
symbols, i.e., $L^\star(\Lambda)$ is a $\sigma$-algebra, we can now show as in Section \ref{sec:3.4}: 

\begin{theorem}
If $L^\star(\Lambda)$ is an inverse-closed, $*$-closed $\sigma$-algebra, then 
$T^\star(\Lambda)$ is an inverse-closed $\sigma$-algebra.
\end{theorem}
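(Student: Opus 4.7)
The plan is to mirror Theorem \ref{thm:sigma}, after first checking that inverse-closedness transfers from $L^\star(\Lambda)$ to $T^\star(\Lambda)$.

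For inverse-closedness, given a projection $P(\lambda)\in L^0(\bfg;\Lambda)$ with $\bfg=(g,g)$ and $R(\lambda)\in T^{-\infty}(\bfg,P,P;\Lambda)$, I would view $R(\lambda)$ as lying in $L^{-\infty}(\bfg;\Lambda)$ and apply inverse-closedness of $L^\star(\Lambda)$ to obtain $S(\lambda)\in L^{-\infty}(\bfg;\Lambda)$ with $(1+R)(1+S)=(1+S)(1+R)=1$ for $|\lambda|$ large. From $S=-R(1+S)=-(1+S)R$ combined with $PR=R=RP$, one forces $PS=S=SP$, so $S\in T^{-\infty}(\bfg,P,P;\Lambda)$. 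A direct algebraic check then gives $(P+R)(P+S)=(P+S)(P+R)=P$ for $|\lambda|$ large.

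For the $\sigma$-algebra structure, I would define $\sigma_\ell(A,P_0,P_1)$ exactly as in Section \ref{sec:3.4} by restricting $\sigma_\ell(A)$ to the subbundle $E_\ell(g_0,P_0)=\sigma_\ell(P_0)(E_\ell(g_0))$. Properties $(1)$, $(2)$, $(4)$ follow immediately from the corresponding properties of $\sigma$ in $L^\star(\Lambda)$. For property $(3)$, necessity is easy: any parametrix $B$ produces $\sigma(B,P_1,P_0)$ as a fibrewise inverse of $\sigma(A,P_0,P_1)$. The nontrivial direction is sufficiency. Assuming $\sigma(A,P_0,P_1)$ is fibrewise invertible, I would form
$$B(\lambda):=A(\lambda)^*A(\lambda)+(1-P_0(\lambda))^*(1-P_0(\lambda))\in L^0(\bfg_0;\Lambda),$$
whose symbol $\sigma_\ell(B)=\sigma_\ell(A)^*\sigma_\ell(A)+\sigma_\ell(1-P_0)^*\sigma_\ell(1-P_0)$ is fibrewise an isomorphism by the pointwise version of Remark \ref{rem:opB}. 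Hence $B(\lambda)$ is parameter-elliptic and admits a parametrix $C(\lambda)\in L^0(\bfg_0;\Lambda)$. Sandwiching $CB-1\in L^{-\infty}(\bfg_0;\Lambda)$ by $P_0$ and using $P_1A=AP_0=A$ together with $(1-P_0)P_0=0$ produces $(P_0CA^*P_1)A-P_0\in T^{-\infty}(\bfg_0,P_0,P_0;\Lambda)$, so that $P_0CA^*P_1\in T^0(\bfg^{(-1)},P_1,P_0;\Lambda)$ is a left parametrix. Applying the same argument to $A^*\in T^0(\bfg^{(-1)},P_1^*,P_0^*;\Lambda)$ and taking adjoints yields a right parametrix, and the two can be combined in the standard way to produce a two-sided parametrix.

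The step I expect to require the most care is verifying that $\sigma_\ell(B)$ is fibrewise invertible: Remark \ref{rem:opB} is stated for single operators on $H^0$, whereas here it must be applied on each fibre of the possibly infinite-dimensional Hilbert bundles $E_\ell(g_0)$. The argument is purely algebraic and rests only on the Hilbert-space structure of the fibres, so it transfers, but one should be explicit about this. The parameter dependence itself poses no additional difficulty, since all constructions remain within the a priori fixed classes $L^\mu(\cdot;\Lambda)$ and $T^\mu(\cdot;\Lambda)$, and the decay of smoothing remainders is preserved under composition with bounded projection families.
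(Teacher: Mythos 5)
Your argument fills in precisely what the paper leaves to the reader: the paper provides no proof for this theorem, only remarking that inverse-closedness of $L^\star(\Lambda)$ implies that of $T^\star(\Lambda)$ and that the $\sigma$-algebra property follows by arguing as in Section~\ref{sec:3.4}, and your proof carries out exactly these two steps, including the correct caution about applying Remark~\ref{rem:opB} fibrewise (where one needs the closed-range argument of Lemma~\ref{lem:fred} on each fibre, not merely injectivity). One small repair in the inverse-closedness part: from $S=-R(1+S)=-(1+S)R$ you derive $PS=S=SP$, but these identities hold only for $|\lambda|$ large (where $(1+R)(1+S)=1$ is valid), so the $S$ furnished by inverse-closedness of $L^\star(\Lambda)$ need not itself lie in $T^{-\infty}(\bfg,P,P;\Lambda)$; one should pass to $PSP$, which belongs to the Toeplitz class by construction and coincides with $S$ for large $\lambda$, after which $(P+R)(P+PSP)=(P+PSP)(P+R)=P$ for large $\lambda$ follows exactly as you indicate.
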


\section{$SG$-pseudodifferential operators}\label{sec:4}

If $X$ is a Fr\'echet space let us denote by $S^\mu(\rz^m,X)$ the Fr\'echet space of all smooth 
functions $a:\rz^m\to X$ satisfying estimates 
 $$\|D^\gamma_z a(z)\|\le C_\gamma (1+|z|)^{\mu-|\gamma|}$$
uniformly in $z\in\rz^m$ for any multi-index $\gamma$ and any semi-norm $\|\cdot\|$ of $X$ 
$($the constant $C_\gamma$ depends also on the semi-norm$)$. With $S^{(\mu)}(\rz^m,X)$ we 
denote the space of all smooth functions $a:\rz^m\setminus\{0\}\to X$ of the form 
 $$a(z)=|z|^\mu \wt{a}\big({z}/{|z|}\big),\qquad \wt{a}:S^{m-1}\to X, $$
where $\sz^{m-1}$ denotes the unit-sphere in $\rz^m$. Moreover, $S^{\mu}_\cl(\rz^m,X)$ 
denotes the subspace of symbols $a\in S^\mu(\rz^m,X)$ that have asymptotic expansions 
into homogeneous components: There exist $a^{(\mu-j)}\in S^{(\mu-j)}(\rz^m,X)$ such that 
 $$a-\sum_{j=0}^{N-1} \chi a^{(\mu-j)}\;\in\; S^{\mu-N}(\rz^m,X)$$
for any positive integer $N$ and with $\chi(z)$ being a zero-excision 
function $($i.e., $\chi:\rz^m\to\rz$ smooth, vanishing in an open neighborhood of 
$z=0$, and being constant $1$ outside some compact set$)$. The function $a^{(\mu)}$ is called 
the principal component of $a$. 

The class of pseudodifferential symbols we now consider are, roughly speaking, classical 
both in the $x$-variable and the corresponding co-variable $\xi$ $($for precise details 
we refer the reader to \cite{EgSc}$)$.

\begin{definition}
For $\mu,m\in\rz$ and $N_0,N_1\in\nz$ let us define 
 $$S^{\mu,m}_\cl(\rz^n\times\rz^n,N_0,N_1):=
   S^{m}_\cl\big(\rz^n_x,S^{\mu}_\cl(\rz^n_\xi,\scrL(\cz^{N_0},\cz^{N_1}))\big).$$
The space of associated pseudodifferential operators, 
 $$(Au)(x)=[\op(a)u](x)=\int e^{ix\xi}a(x,\xi)\,\wh{u}(\xi)\,\dbar\xi,$$
we shall denote by $L^{\mu,m}_\cl(\rz^n,N_0,N_1)$. 
\end{definition}

The class of regularizing operators, 
 $$L^{-\infty,-\infty}(\rz^n,N_0,N_1):=
   \mathop{\mbox{\Large$\cap$}}_{\mu,m\in\rz}L^{\mu,m}_\cl(\rz^n,N_0,N_1),$$
consists of all integral operators having a kernel $k(x,y)\in \scrS(\rz^{2n}_{(x,y)})$, 
the space of rapidly decreasing functions $($with respect to the standard Lebesgue 
measure on $\rz^n)$. 

The natural scale of Sobolev spaces such operators act in is given by 
 $$H^{s,\delta}(\rz^n,N):=(1+|x|^2)^{\delta/2}H^s(\rz^n,\cz^N),\qquad s,\delta\in\rz,$$
i.e., the standard $\cz^N$-valued Sobolev spaces on $\rz^n$ multiplied by a weight function. 
Then $A\in L^{\mu,m}_\cl(\rz^n,N_0,N_1)$ induces continuous operators 
 $$A:\;H^{s,\delta}(\rz^n,N_0)\lra H^{s-\mu,\delta-m}(\rz^n,N_1)$$
for any choice of $s$ and $\delta$.  

By passing to the principal component with respect to $x$, or with respect to $\xi$, or 
simultaneously with respect to both $x$ and $\xi$ we associate with $A=\op(a)$ three 
principal symbols, which are bundle morphisms 
\begin{align*}
 \sigma^\mu(A)&:\;(\rz^n_x\times\sz^{n-1}_\xi)\times\cz^{N_0}\lra
   (\rz^n_x\times\sz^{n-1}_\xi)\times\cz^{N_1},\\  
 \sigma_m(A)&:\;(\sz^{n-1}_x\times\rz^{n}_\xi)\times\cz^{N_0}\lra
   (\sz^{n-1}_x\times\rz^{n}_\xi)\times\cz^{N_1},\\  
 \sigma^\mu_m(A)&:\;(\sz^{n-1}_x\times\sz^{n-1}_\xi)\times\cz^{N_0}\lra
   (\sz^{n-1}_x\times\sz^{n-1}_\xi)\times\cz^{N_1}.  
\end{align*}
Let us then set 
 $$\sigma(A)=\Big(\sigma^\mu(A),\sigma_m(A),\sigma^\mu_m(A)\Big),\qquad 
   A\in L^{\mu,m}_\cl(\rz^n,N_0,N_1).$$
Ellipticity of $A$ is defined as the invertibility of $($all three components of$)$ $\sigma(A)$. 
It is well-known that ellipticity is equivalent to the existence of a parametrix 
$B\in L^{-\mu,-m}_\cl(\rz^n,N_1,N_0)$ modulo remainders in $L^{-\infty,-\infty}$, and it is 
equivalent to $A:H^{s,\delta}(\rz^n,N_0)\to H^{s-\mu,\delta-m}(\rz^n,N_1)$ being a Fredholm 
operator for some $($and then for all$)$ $s,\delta\in\rz$. For the latter result see \cite{Grie}. 
 
With two projections $P_j\in L^{0,0}_\cl(\rz^n,N_j,N_j)$, $j=0,1$, let us write 
 $$T^{\mu,m}_\cl(\rz^n,(N_0,P_0),(N_1,P_1))
   =P_1\,L^{\mu,m}_\cl(\rz^n,N_0,N_1)\,P_0$$
for the associated Toeplitz subalgebra and 
 $$H^{s,\delta}(\rz^n,N_j,P_j)=P_j\big(H^{s,\delta}(\rz^n,N_j)\big)$$
for the associated scale of projected Sobolev spaces. 
The principal symbols $\sigma^0(P_j)$, $\sigma_0(P_j)$ and $\sigma_0^0(P_j)$ of $P_j$ 
are itself projections, thus their ranges define subbundles 
\begin{align*}
 E^0(N_j,P_j)&\subset
   (\rz^n_x\times\sz^{n-1}_\xi)\times\cz^{N_j},\\  
 E_0(N_j,P_j)&\subset
   (\sz^{n-1}_x\times\rz^{n}_\xi)\times\cz^{N_j},\\  
 E^0_0(N_j,P_j)&\subset
   (\sz^{n-1}_x\times\sz^{n-1}_\xi)\times\cz^{N_j}.  
\end{align*}
The principal symbol ${\sigma}(A,P_0,P_1)$ consists of the three components 
\begin{align*}\label{eq:sg-symb}
\begin{split}
 {\sigma}^\mu(A,P_0,P_1)&:\,E^0(N_0,P_0)\lra E^0(N_1,P_1),\\
 {\sigma}_m(A,P_0,P_1)&:\,E_0(N_0,P_0)\lra E_0(N_1,P_1),\\
 {\sigma}^\mu_m(A,P_0,P_1)&:\,E^0_0(N_0,P_0)\lra E^0_0(N_1,P_1).
\end{split}
\end{align*}
obtained by the restriction of the corresponding symbols of $A$.

\begin{theorem}
For $A\in T^{\mu,m}_\cl(\rz^n,(N_0,P_0),(N_1,P_1))$ the following statements are 
equivalent: 
\begin{itemize}
 \item[$($a$)$] $A$ is elliptic, i.e., has a parametrix 
  $B\in T^{-\mu,-m}_\cl(M,(N_1,P_1),(N_0,P_0))$ $($modulo remainders in $T^{-\infty,-\infty})$. 
 \item[$($b$)$] $A:H^{s,\delta}(\rz^n,N_0,P_0)\to H^{s-\mu,\delta-m}(\rz^n,N_1,P_1)$ is a 
  Fredholm operator for some $s,\delta\in\rz$. 
 \item[$($c$)$] The morphisms ${\sigma}^\mu(A,P_0,P_1)$, ${\sigma}_m(A,P_0,P_1)$, and 
  ${\sigma}^\mu_m(A,P_0,P_1)$ are isomorphisms. 
\end{itemize}
In this case, $($b$)$ is true for arbitrary $s,\delta\in\rz$.  
\end{theorem}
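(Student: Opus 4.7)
The plan is to show that this theorem is essentially a direct application of the abstract machinery of Sections \ref{sec:3.2}--\ref{sec:3.4}, once it is checked that $L^{\star,\star}_\cl(\rz^n)$ fits the abstract framework. Concretely, the ``data'' are pairs $g=(\rz^n,\cz^N)$ with two filtration parameters $(\mu,m)$, the Hilbert spaces are the weighted Sobolev spaces $H^{s,\delta}(\rz^n,N)$, and the symbolic tuple is $\sigma(A)=(\sigma^\mu(A),\sigma_m(A),\sigma^\mu_m(A))$. From the known facts recalled just before the theorem (asymptotic completeness of the $SG$-calculus, the characterization of ellipticity as the invertibility of $\sigma(A)$, the equivalence between ellipticity and the Fredholm property due to Grieme, and the standard fact that $L^{\star,\star}_\cl$ is closed under formal adjoints with $\sigma(A^*)=\sigma(A)^*$), the ambient algebra $L^{\star,\star}_\cl(\rz^n)$ is an asymptotically complete, $*$-closed $\sigma$-algebra with the Fredholm property in the sense of Section \ref{sec:2.1}.

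With this in place, the three equivalences for zero-order operators $(\mu,m)=(0,0)$ follow at once: Theorem \ref{thm:fred} gives (b)$\Rightarrow$(a), while (a)$\Rightarrow$(b) is immediate since any $R\in T^{-\infty,-\infty}$ acts as a compact operator between the closed subspaces $H^{s,\delta}(\rz^n,N_j,P_j)$ (it is the restriction of a compact operator on the ambient Sobolev spaces). The equivalence (a)$\Leftrightarrow$(c) is exactly Theorem \ref{thm:sigma} applied to the symbol tuple of $L^{\star,\star}_\cl$, where each component of $\sigma(A,P_0,P_1)$ is obtained by restriction to the subbundles $E^0(N_j,P_j)$, $E_0(N_j,P_j)$, $E^0_0(N_j,P_j)$ defined by the ranges of $\sigma^0(P_j)$, $\sigma_0(P_j)$, $\sigma^0_0(P_j)$.

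To pass from $(\mu,m)=(0,0)$ to arbitrary $(\mu,m)$ one uses reductions of orders as indicated in Section \ref{sec:3.3}. In the $SG$-calculus such reductions exist: for each $(\mu,m)\in\rz^2$ and each $N$ there is an invertible operator $S_{\mu,m}\in L^{\mu,m}_\cl(\rz^n,N,N)$ with inverse $S_{-\mu,-m}\in L^{-\mu,-m}_\cl(\rz^n,N,N)$ (e.g.\ built from $\op(\langle\xi\rangle^\mu)\op(\langle x\rangle^m)$ or their classical refinements). Conjugating $A$ on the left and right by appropriate order reductions turns the mapping between $H^{s,\delta}(\rz^n,N_0,P_0)$ and $H^{s-\mu,\delta-m}(\rz^n,N_1,P_1)$ into a mapping between $H^{0,0}$-versions for suitably modified projections $\widetilde{P}_j$ (obtained by conjugating $P_j$ by the reductions, which leaves the three principal symbols unchanged up to natural identifications), and the invertibility/Fredholm/ellipticity statements transfer through the commutative diagram at the end of Section \ref{sec:3.3}. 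The final clause ``(b) is true for arbitrary $s,\delta$'' then follows because an elliptic $A$ admits a parametrix $B$ with $BA-P_0$ and $AB-P_1$ in $T^{-\infty,-\infty}$, and these remainders induce compact operators on every projected Sobolev space.

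The only potentially non-obvious step is the verification that the abstract $*$-closedness hypothesis used in Theorem \ref{thm:fred} is compatible with the three-component principal symbol of the $SG$-calculus: one must check that each $\sigma_\ell(A^*)$ is indeed the fibrewise adjoint of $\sigma_\ell(A)$, so that the operator $B=A^*A+(1-P_0)^*(1-P_0)$ introduced in the proof of Theorem \ref{thm:fred} is $SG$-elliptic as soon as the restricted symbols $\sigma_\ell(A,P_0,P_1)$ are isomorphisms. This is the standard behaviour of the $SG$-principal symbol under formal adjunction, and once recorded, the proof is complete.
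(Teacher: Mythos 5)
Your proposal follows essentially the same route as the paper: identify the $SG$-calculus as an asymptotically complete, $*$-closed $\sigma$-algebra with the Fredholm property, reduce to $(\mu,m)=(0,0)$ and $s=\delta=0$ and invoke Theorems~\ref{thm:fred} and~\ref{thm:sigma}, then lift to arbitrary $(\mu,m)$ and $(s,\delta)$ by $SG$ reductions of orders (the paper uses $\op([x]^m[\xi]^\mu I_N)$, your $\op(\langle\xi\rangle^\mu)\op(\langle x\rangle^m)$ is equivalent). The only minor point the paper makes more explicit is that the abstract one-parameter framework of Section~\ref{sec:2.1} is matched to the two-parameter $SG$-filtration by taking $L^{\mu}(\bfg):=L^{\mu,\mu}_\cl(\rz^n,N_0,N_1)$ and $H^s(g):=H^{s,s}(\rz^n,N)$, which suffices because only the zero-order case is fed to the abstract theorems.
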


In case $\mu=m=0$ and $s=\delta=0$ this theorem is just a particular case of Theorems  
\ref{thm:fred} and \ref{thm:sigma}, with 
\begin{align*}
 L^{\mu}(\bfg)&=L^{\mu,\mu}_\cl(\rz^n,N_0,N_1),\qquad \bfg=\big((\rz^n,N_0),(\rz^n,N_1)\big)\\
 H^s(g)&=H^{s,s}(\rz^n,N),\qquad g=(\rz^n,N).
\end{align*}
The general case is obtained by the use of order reductions, analogously as described in 
Section 3.3 $($with the minor modification that we have here two parameters $\mu$ and $m)$. 
In fact, the operators having symbol $[x]^m[\xi]^\mu I_N$, where $I_N$ is the $N\times N$-unit 
matrix and $[\cdot]:\rz^n\to(0,\infty)$ is a smooth function that coincides with 
$|\cdot|$ outside the unit-ball, induce isomorphisms 
$H^{s,\delta}(\rz^n,N)\lra H^{s-\mu,\delta-m}(\rz^n,N)$
for arbitrary $s$ and $\delta$. Theorem \ref{thm:spectral} implies the spectral invariance: 

\begin{theorem}
If $A\in T^{\mu,m}_\cl(\rz^n,(N_0,P_0),(N_1,P_1))$ induces an isomorphism 
 $$H^{s,\delta}(\rz^n,N_0,P_0)\lra H^{s-\mu,\delta-m}(\rz^n,N_1,P_1)$$ 
for some $s,\delta\in\rz$, 
then this is true for any $s,\delta\in\rz$ and the inverse $A^{-1}$ belongs to 
$T^{-\mu,-m}_\cl(M,(N_1,P_1),(N_0,P_0))$. 
\end{theorem}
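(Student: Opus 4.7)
The plan is to deduce the result from the abstract spectral invariance theorem, Theorem \ref{thm:spectral}, after reducing the general-order case to bi-order zero by order reductions, parallel to what was done for the preceding Fredholm characterization. Specifically, the SG-calculus fits the abstract framework of Section \ref{sec:2} with $L^\mu(\bfg)=L^{\mu,\mu}_\cl(\rz^n,N_0,N_1)$ and $H^s(g)=H^{s,s}(\rz^n,N)$, while the two-index nature of the SG-setting is accommodated by the obvious two-parameter analogue of the order-reduction procedure in Section \ref{sec:3.3}.

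The preliminary step is to check the three hypotheses of Theorem \ref{thm:spectral} for $L^{\star,\star}_\cl$: the Fredholm property (already noted and proved in \cite{Grie}), $*$-closedness (standard for classical SG-symbols), and the smoothing-sandwich condition ($R_1 T R_0\in L^{-\infty,-\infty}$ whenever $R_0,R_1$ are smoothing and $T\colon H^{0,0}\to H^{0,0}$ is bounded). Only the last requires a computation: writing the smoothing operators as integral operators with kernels $k_j\in\scrS(\rz^{2n})$, the kernel of $R_1 T R_0$ at $(x,z)$ equals
\begin{equation*}
 K(x,z)=\bigl\langle T\,k_0(\cdot,z),\,\overline{k_1(x,\cdot)}\bigr\rangle_{L^2(\rz^n)},
\end{equation*}
and the rapid decay of $k_0$ in the second variable and of $k_1$ in the first, together with the $L^2$-boundedness of $T$, force $K$ and all of its derivatives in $(x,z)$ to decay faster than any polynomial, so $K\in\scrS(\rz^{2n})$.

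With the hypotheses in hand, fix $(s,\delta)$ at which $A$ is an isomorphism and introduce the SG-order reductions $\Lambda^0:=\op([x]^{-\delta}[\xi]^{-s}I_{N_0})\in L^{-s,-\delta}_\cl$ and $\Lambda^1:=\op([x]^{\delta-m}[\xi]^{s-\mu}I_{N_1})\in L^{s-\mu,\delta-m}_\cl$, both of which are invertible in the SG-calculus and implement isomorphisms between the relevant weighted Sobolev spaces. Setting $\wt A:=\Lambda^1 A\,\Lambda^0$, $\wt P_0:=(\Lambda^0)^{-1}P_0\Lambda^0$ and $\wt P_1:=\Lambda^1 P_1(\Lambda^1)^{-1}$, one verifies directly that $\wt A\in T^{0,0}_\cl(\rz^n,(N_0,\wt P_0),(N_1,\wt P_1))$ and that it induces an isomorphism of the projected $H^{0,0}$-spaces, so Theorem \ref{thm:spectral} provides an actual two-sided inverse $\wt B$ in the associated Toeplitz subalgebra. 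Conjugating back gives $B:=\Lambda^0\wt B\Lambda^1\in T^{-\mu,-m}_\cl(\rz^n,(N_1,P_1),(N_0,P_0))$ satisfying $BA=P_0$ and $AB=P_1$. Because $B$ lies in the calculus of bi-order $(-\mu,-m)$, it acts continuously $H^{s'-\mu,\delta'-m}(\rz^n,N_1,P_1)\to H^{s',\delta'}(\rz^n,N_0,P_0)$ for \emph{every} pair $(s',\delta')$, which yields the isomorphism claim at all levels of the Sobolev scale. The only moderately technical step in the plan is the Schwartz-kernel estimate proving the smoothing-sandwich hypothesis; everything else is a formal transfer of the abstract theorem through the order reductions.
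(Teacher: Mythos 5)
Your proposal takes essentially the same route the paper intends: reduce to the bi-order-zero, weight-zero case via SG-order reductions (Section~\ref{sec:3.3}, with the two-parameter modification the paper notes), identify $L^\mu(\bfg)=L^{\mu,\mu}_\cl$ and $H^s(g)=H^{s,s}$, and then invoke the abstract spectral-invariance result, Theorem~\ref{thm:spectral}. Your bookkeeping with the conjugated projections $\wt P_0=(\Lambda^0)^{-1}P_0\Lambda^0$, $\wt P_1=\Lambda^1 P_1(\Lambda^1)^{-1}$ is correct (both remain projections in $L^{0,0}_\cl$, $\wt A$ lands in the right Toeplitz class, and conjugating $\wt B$ back gives $B\in T^{-\mu,-m}_\cl$ with $BA=P_0$, $AB=P_1$, which then acts on the whole scale). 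The one genuine extra you supply, which the paper leaves tacit, is the verification of the smoothing-sandwich hypothesis $R_1TR_0\in L^{-\infty,-\infty}$ via the Schwartz-kernel formula $K(x,z)=\langle Tk_0(\cdot,z),\overline{k_1(x,\cdot)}\rangle$; that computation is correct and closes the only hypothesis of Theorem~\ref{thm:spectral} that isn't already covered by the Fredholm-property and $*$-closedness statements quoted earlier in the section. So this is the paper's proof, with the implicit step made explicit.
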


\section{Operators on manifolds with conic singularities}\label{sec:5}

We are now going to discuss the cone algebra of Schulze.
For detailed presentations of the cone algebra we refer to \cite{EgSc} and to \cite{Seil}. To keep the presentation lean we shall focus on a version of the cone algebra which is sufficient for the characterization of the Fredholm property in certain weighted Sobolev spaces. 

Let $\bz$ be a smooth compact manifold with boundary $X:=\partial\bz$ with $\mathrm{dim}\,X=n$. We identify a collar neighborhood of $X$ with $[0,1)\times X$ and fix corresponding variables $(t,x)$ near the boundary. The typical differential operators we consider on $($the interior$)$ of $\bz$ are away from the boundary usual differential operators with smooth coefficients, while near the boundary they can be written in the form 
\begin{equation}\label{eq:conediff1}
 A=t^{-\mu}\sum_{j=0}^\mu a_j(t)(-t\partial_t)^j,\qquad 
 a_j\in\scrC^\infty([0,1),\mathrm{Diff}^{\mu-j}(X)),
\end{equation}
with coefficients taking values in the space of differential operators on $X$. 
The Laplacian with respect to a Riemannian 
metric which near the boundary has the form $dt^2+t^2dx^2$ is of that form, with $\mu=2$. 
Such "cone differential operators" act in a scale of weighted Sobolev spaces 
 $$\scrH^{s,\gamma}(\bz)=k^\gamma \scrH^{s,0}(\bz),\qquad s,\gamma\in\rz,$$
where $k$ is a smooth positive function on the interior of $\bz$ that coincides with $k(t,x)=t$ 
near the boundary, while $u\in\scrH^{s,0}(\bz)$ for $s\in\nz_0$ if and only if 
$u\in H^s_{\mathrm{loc}}(\mathrm{int}\,\bz)$ and 
 $$t^{\frac{n+1}{2}-\gamma}(t\partial_t)^jD^\alpha_x u(t,x)\in 
   L^2\Big((0,1)\times X,\frac{dt}{t}dx\Big)$$
for all $s+|\alpha|\le s$; this definition can be extended to real $s$ by interpolation and 
duality. A differential operator as above induces then maps 
\begin{equation}\label{eq:conediff2}
 A:\;\scrH^{s,\gamma}(\bz)\lra \scrH^{s-\mu,\gamma-\mu}(\bz).
\end{equation}
If we set $\Gamma_\beta=\{z\in\cz\st \re z=\beta\}$, $\beta\in\rz$, and let 
\begin{align*}
 (M_\gamma u)(z)&=\int_0^\infty t^{z}u(t)\,\frac{dt}{t},\qquad z\in\Gamma_{1/2-\gamma},\\
 (M_\gamma^{-1} v)(t)&=\frac{1}{2\pi i}\int_{\Gamma_{1/2-\gamma}} t^{-z}v(z)\,dz,\qquad t>0,
\end{align*}
denote the weighted Mellin transform and its inverse, then we can write \eqref{eq:conediff2} 
with $A$ from \eqref{eq:conediff1} and $u$ supported close to the boundary as a Mellin 
pseudodifferential operator, 
 $$(Au)(t)=t^{-\mu}\big(\op_M^{\gamma-n/2}(h)u\big)(t)
   =t^{-\mu}M_{\gamma-n/2}^{-1}\Big(h(t,z)(M_{\gamma-n/2} u)(z)\Big)$$
with $h(t,z)=\sum_{j=0}^\mu a_j(t)z^j$. 

\begin{definition}\label{def:calg}
The cone algebra $C^{\mu-j}(\bz,\gamma,\gamma-\mu)$, $\mu\in\gz$, $j\in\nz_0$, consists of all 
operators of the form 
 $$A=\omega\,t^{-\mu}\op_M^{\gamma-n/2}(h)\,\omega_0+(1-\omega)\,\wt{A}\,(1-\omega_1)+G,$$
where $\omega,\omega_0,\omega_1$ are smooth functions supported in $[0,1)\times X$ which are 
equal to 1 near the boundary and satisfy $\omega\omega_0=\omega$, $\omega\omega_1=\omega_1$ and 
 $$h(t,z)=t^jh_0(t,z)+\delta_{j0}h_{-\infty}(z),$$
$(\delta_{j0}=1$ if $j=0$ and $\delta_{j0}=0$ if $j\ge1)$ where
\begin{itemize}
 \item[$($i$)$] $h_0$ is smooth in $t\in[0,1)$ and entire in $z$ with values in 
  $L^{\mu-j}_\cl(X)$ and 
   $$h_0(t,\beta+i\tau)\in\scrC^\infty\big([0,1),L^{\mu-j}_\cl(X;\rz_\tau)\big)$$
  uniformly for $\beta$ in compact intervals,\footnote{$L^{\nu}_\cl(X;\rz_\tau)$ denotes the 
  class of parameter-dependent pseudodifferential operators of order $\nu$, 
  with parameter $\tau\in\rz$}  
 \item[$($ii$)$] $h_{-\infty}$ is holomorphic with values in $L^{-\infty}(X)$ in a strip 
  $\{z\st |(n+1)/2-\gamma-\re z|<\eps\}$ with some $\eps>0$, and  
  $h_{-\infty}(\beta+i\tau)\in L^{-\infty}_\cl(X;\rz_\tau)$ uniformly in $\beta$, 
 \item[$($iii$)$] $\wt{A}\in L^{\mu-j}_\cl(\mathrm{int}\,\bz)$ is a usual pseudodifferential 
  operator,
 \item[$($iv$)$] $G$ is an integral operator with respect to the measure $t^n{dt}{dx}$ 
  with kernel in 
  $\scrH^{\infty,\gamma-\mu+\eps}(\bz)\wh{\otimes}_\pi \scrH^{\infty,-\gamma+\eps}(\bz)$ 
  for some $\eps>0$.\footnote{$\wh{\otimes}_\pi$ denotes the completed projective tensor product.} 
\end{itemize} 
The operators $G$ of $($iv$)$ constitute the class of smoothing operators, denoted by 
$C^{-\infty}(\bz,\gamma,\gamma-\mu)$. 
\end{definition}

It is straight-forward to extend the previous definition to operators acting between sections 
into vector bundles $E_0$ and $E_1$ over $\bz$, yielding algebras 
\begin{equation}\label{eq:calg}
 C^{\mu-j}(\bz,(\gamma,E_0),(\gamma-\mu,E_1)).
\end{equation}
With any element $A$ of \eqref{eq:calg} with $j=0$ we associate two principal symbols. 
The first is 
\begin{equation}\label{eq:csymb1}
 \sigma_c^\mu(A):\;\pi^*_cE_0\lra \pi^*_cE_1,
\end{equation}
where $\pi_c:T^*_c\bz\setminus0\to\bz$ denotes the canonical projection of the ``compressed" co-tangent bundle of $\bz$ onto $\bz$; for a precise definition see \cite{GKM}. Roughly speaking, over the interior of $\bz$ this symbol recovers the usual principal symbol, while for $t\to0$ the product $t\tau$ $($arising from the totally characteristic derivative $t\partial_t)$ is replaced by a single variable $\wt\tau$. The second is the so-called \emph{conormal symbol}. Using the notation from Definition \ref{def:calg} it is the function 
 $$h(0,z):\;H^s(X)\lra H^{s-\mu}(X),\qquad \re z=\frac{n+1}{2}-\gamma,$$
where the choice of $s$ does not play a role. Identifying operator-valued functions with morphisms 
in trivial bundles, for an $A$ from \eqref{eq:calg} with $j=0$ we get 
\begin{equation}\label{eq:csymb2}
 \sigma_M^\mu(A):\;\Gamma_{\frac{n+1}{2}-\gamma}\times H^s(X,E_0^\prime)\lra 
 \Gamma_{\frac{n+1}{2}-\gamma}\times H^{s-\mu}(X,E_1^\prime),
\end{equation}
where $E_j^\prime$ denotes the restriction of $E_j$ to $X=\partial\bz$. 

For convenience of notation let us now set $\gamma_0=\gamma$ and $\gamma_1=\gamma-\mu$. Given two 
projections $P_j\in C^0(\bz,(\gamma_j,E_j),(\gamma_j,E_j))$, the associated Toeplitz subalgebras 
are 
 $$T^{\mu-j}(\bz,(\gamma_0,E_0,P_0),(\gamma_1,E_1,P_1))=
   P_1\,C^{\mu-j}(\bz,(\gamma_0,E_0),(\gamma_1,E_1))\,P_0.$$
The scales of projected Sobolev spaces are 
 $$\scrH^{s,\gamma_j}(\bz,E_j,P_j)=P_j\big(\scrH^{s,\gamma_j}(\bz,E_j)\big).$$
Since both $\sigma_c^0(P_j)$ and $\sigma_M^0(P_j)$ are projections we obtain subbundles 
\begin{align*}
 E^c_j(P_j)&:=\sigma_c^0(P_j)\big(\pi^*_cE_j\big)\subset \pi^*_cE_j,\\
 E^M_j(P_j)&:=\sigma_M^0(P_j)\big(\Gamma_{\frac{n+1}{2}-\gamma_j}\times H^s(X,E_j^\prime)\big)
  \subset \Gamma_{\frac{n+1}{2}-\gamma_j}\times H^s(X,E_j^\prime).
\end{align*}
The principal symbol ${\sigma}(A,P_0,P_1)$ consists of the two components 
\begin{align*}\label{eq:c-symb}
 {\sigma}^\mu_c(A,P_0,P_1):\,E^c_0(P_0)\lra E^c_1(P_1),\qquad 
 {\sigma}^\mu_M(A,P_0,P_1):\,E_0^M(P_0)\lra E_1^M(P_1),
\end{align*}
induced by the restriction of $\sigma_c^\mu(A)$ and $\sigma_M^\mu(A)$, respectively. 

\begin{theorem}
For $A\in T^{\mu}(\bz,(\gamma,E_0,P_0),(\gamma-\mu,E_1,P_1))$ the following statements are 
equivalent: 
\begin{itemize}
 \item[$($a$)$] $A$ is elliptic, i.e., has a parametrix 
  $B\in T^{-\mu}(\bz,(\gamma-\mu,E_1,P_1),(\gamma,E_0,P_0))$.   
 \item[$($b$)$] $A:\scrH^{s,\gamma}(\bz,E_0,P_0)\to \scrH^{s-\mu,\gamma-\mu}(\bz,E_1,P_1)$ is a 
  Fredholm operator for some $s\in\rz$. 
 \item[$($c$)$] Both morphisms ${\sigma}^\mu_c(A,P_0,P_1)$ and ${\sigma}^\mu_M(A,P_0,P_1)$ are 
  isomorphisms. 
\end{itemize}
In this case, $($b$)$ is true for arbitrary $s\in\rz$.  
\end{theorem}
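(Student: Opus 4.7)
My plan is to derive this theorem as an application of the abstract framework developed in Sections \ref{sec:2} and \ref{sec:3}, specifically Theorems \ref{thm:fred} and \ref{thm:sigma}, after verifying that Schulze's cone algebra fits that framework and reducing to the zero-order, $s=0$ case by means of order reductions as indicated in Section \ref{sec:3.3}.

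First I would set up the correspondence with the abstract setting. The admissible data are triples $g=(\bz,\gamma,E)$, with associated Hilbert space scale $H^s(g)=\scrH^{s,\gamma}(\bz,E)$; a pair $\bfg=(g_0,g_1)$ with $g_j=(\bz,\gamma_j,E_j)$ and $\gamma_1=\gamma_0-\mu$ corresponds to $C^\mu(\bz,(\gamma_0,E_0),(\gamma_1,E_1))$. I would then invoke the known properties of the cone calculus as presented in \cite{EgSc,Seil}: $C^\star$ is asymptotically complete and $*$-closed; its smoothing ideal consists of operators with rapidly flat integral kernels over $\bz\times\bz$, which are compact on every $\scrH^{s,\gamma}(\bz,E)$; and it has the Fredholm property in the sense of Section \ref{sec:2.1}. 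The assignment $A\mapsto(\sigma_c^\mu(A),\sigma_M^\mu(A))$ is multiplicative, vanishes on smoothing remainders, satisfies $\sigma(A^*)=\sigma(A)^*$ fibrewise, and its invertibility characterises ellipticity; in other words, $C^\star$ is a $*$-closed $\sigma$-algebra.

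Second, I would reduce to the case $\mu=0$, $\gamma_0=\gamma_1=0$, $s=0$. The cone calculus contains order-reducing operators $R^{\sigma,\delta}_E\in C^{\sigma}(\bz,(\delta,E),(\delta-\sigma,E))$ which are isomorphisms $\scrH^{t,\beta}(\bz,E)\to\scrH^{t-\sigma,\beta-\sigma}(\bz,E)$ for all $t,\beta$ and whose inverses lie again in the cone algebra. Conjugating $A$ and the projections $P_0,P_1$ by appropriate reductions yields a zero-order operator $\wt A\in T^{0}(\wt\bfg,\wt P_0,\wt P_1)$ acting between $\scrH^{0,0}$-spaces; the three statements (a), (b), (c) for $A$ translate one-for-one into the corresponding statements for $\wt A$, since the principal-symbol components are preserved up to multiplication by the invertible symbols of the reductions. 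In this reduced setting Theorem \ref{thm:fred} gives the equivalence (a)$\Leftrightarrow$(b) and Theorem \ref{thm:sigma} gives (a)$\Leftrightarrow$(c). The independence of (b) on $s$ follows once (a) is established: the parametrix $B\in T^{-\mu}(\bz,(\gamma-\mu,E_1,P_1),(\gamma,E_0,P_0))$ is continuous between the projected scales for every $s$, and the remainders $BA-P_0$, $AB-P_1$ are smoothing, hence compact on every $\scrH^{s,\gamma}(\bz,E_j,P_j)$.

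The main technical point I expect to require care is the symbolic descent for the conormal symbol. Unlike $\sigma_c^\mu$, the conormal symbol $\sigma_M^\mu$ takes values in operators between $H^s(X,E_j^\prime)$-sections along $\Gamma_{\frac{n+1}{2}-\gamma_j}$; to apply Theorem \ref{thm:sigma} one has to read $\sigma_M^0(P_j)$ as a genuine projection morphism between the infinite-dimensional bundles appearing in \eqref{eq:csymb2}, verify property (4) of a $\sigma$-algebra at that symbolic level, and confirm that the restricted morphism ${\sigma}^\mu_M(A,P_0,P_1)$ makes sense fibrewise on every line $\re z=\tfrac{n+1}{2}-\gamma_j$. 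Both are standard features of the Mellin quantisation in Schulze's calculus, so once recorded the theorem follows as a direct assembly of Theorems \ref{thm:fred}, \ref{thm:sigma} with the order-reduction argument of Section \ref{sec:3.3}.
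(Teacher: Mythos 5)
Your proposal is correct and follows essentially the same route as the paper: verify that Schulze's cone calculus is an asymptotically complete, $*$-closed $\sigma$-algebra with the Fredholm property, reduce to $\mu=s=\gamma=0$ via order reductions, and then apply Theorems \ref{thm:fred} and \ref{thm:sigma}. The only addition over the paper's brief indication is your explicit flagging of the infinite-dimensional conormal-symbol bundle along $\Gamma_{\frac{n+1}{2}-\gamma}$ and the $s$-independence of (b), both of which are correct observations and standard in the cone calculus.
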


In fact, by the existence of suitable reductions of orders, we can reduce the proof of the 
previous theorem to the case $\mu=s=\gamma=0$. This is then a particular case of Theorems  
\ref{thm:fred} and \ref{thm:sigma}, with 
\begin{align*}
 L^{\mu}(\bfg)&=C^{\mu}(\bz,(0,E_0),(0,E_1)),\qquad 
 \bfg=\big((\bz,E_0),(\bz,E_1)\big)\\
 H^s(g)&=\scrH^{s,0}(\bz,E),\qquad g=(\bz,E).
\end{align*}
The Toeplitz algebras are spectrally invariant, as a consequence of Theorem 
\ref{thm:spectral}.

\begin{theorem}
If $A\in T^{\mu}(\bz,(\gamma,E_0,P_0),(\gamma-\mu,E_1,P_1))$ induces an isomorphism 
$\scrH^{s,\gamma}(\bz,E_0,P_0)\to \scrH^{s-\mu,\gamma-\mu}(\bz,E_1,P_1)$ for some $s$, 
then for all $s$ and 
  $$A^{-1}\in T^{-\mu}(\bz,(\gamma-\mu,E_1,P_1),(\gamma,E_0,P_0)).$$
\end{theorem}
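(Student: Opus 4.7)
The plan is to reduce the statement to the abstract spectral invariance theorem (Theorem \ref{thm:spectral}) applied to the cone algebra in its zero-order, zero-weight incarnation, and then conjugate back with order reductions.

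First I would set up the reduction. Using the order reductions available in the cone algebra, that is elements $S^j_{\mu,\gamma}\in C^\mu(\bz,(\gamma,E_j),(\gamma-\mu,E_j))$ inducing isomorphisms $\scrH^{s,\gamma}(\bz,E_j)\to\scrH^{s-\mu,\gamma-\mu}(\bz,E_j)$ whose inverses again lie in the cone algebra, I would conjugate $A$ to an operator $\wt{A}\in T^0(\bz,(0,E_0,\wt{P}_0),(0,E_1,\wt{P}_1))$, where $\wt{P}_0,\wt{P}_1$ are the corresponding transported zero-order projections (cf.\ Section \ref{sec:3.3}). Under this conjugation, ``$A$ is an isomorphism between the weighted spaces for some $s$'' translates to ``$\wt A$ is an isomorphism between $\scrH^{0,0}(\bz,E_0,\wt P_0)$ and $\scrH^{0,0}(\bz,E_1,\wt P_1)$.''

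Next I would verify the three hypotheses of Theorem \ref{thm:spectral} for the abstract algebra $L^\mu(\bfg):=C^\mu(\bz,(0,E_0),(0,E_1))$ with $H^s(g):=\scrH^{s,0}(\bz,E)$. The Fredholm property and $*$-closedness for the cone algebra are classical (they underpin the preceding Fredholm/symbol theorem and are documented in \cite{EgSc}, \cite{Seil}). The only non-trivial point is the closedness of the smoothing ideal under left and right multiplication by an arbitrary bounded operator $T\colon\scrH^{0,0}(\bz,E_0)\to\scrH^{0,0}(\bz,E_1)$. By Definition \ref{def:calg}(iv) a smoothing cone operator has an integral kernel in $\scrH^{\infty,\eps}(\bz,E)\wh\otimes_\pi\scrH^{\infty,\eps}(\bz,E')$ for some $\eps>0$; writing $R_0$, $R_1$ through such kernels $k_0,k_1$ and unwinding $R_1 T R_0$ one sees that its kernel is
\[
  (y,y')\mapsto \bigl\langle k_1(y,\cdot),\,T(k_0(\cdot,y'))\bigr\rangle,
\]
which, using continuity of $T$ on $\scrH^{0,0}$ together with the Schwartz-type decay of $k_0,k_1$ in the transverse variables, lies again in a projective tensor product $\scrH^{\infty,\eps'}\wh\otimes_\pi\scrH^{\infty,\eps'}$ for some $\eps'>0$. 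This verification is the main technical step of the argument.

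Once these hypotheses are in place, Theorem \ref{thm:spectral} supplies a $B\in T^0(\bz,(0,E_1,\wt P_1),(0,E_0,\wt P_0))$ with $B\wt A=\wt P_0$ and $\wt A B=\wt P_1$. Conjugating back with the order reductions yields an operator in $T^{-\mu}(\bz,(\gamma-\mu,E_1,P_1),(\gamma,E_0,P_0))$ that is a two-sided inverse of $A$ on every scale $\scrH^{s,\gamma}(\bz,E_0,P_0)\to\scrH^{s-\mu,\gamma-\mu}(\bz,E_1,P_1)$, proving simultaneously the $s$-independence of invertibility and the membership of $A^{-1}$ in the claimed Toeplitz subalgebra.
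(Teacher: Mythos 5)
Your proposal follows the paper's own strategy exactly: conjugate by order reductions to the case $\mu=s=\gamma=0$ and then invoke the abstract spectral invariance theorem (Theorem \ref{thm:spectral}) before conjugating back. The one place you add genuine content is the explicit verification that $R_1TR_0$ has a smoothing kernel in the projective tensor product $\scrH^{\infty,\eps'}\wh\otimes_\pi\scrH^{\infty,\eps'}$ — the paper takes this hypothesis for granted, and your argument via the tensor-product expansion of the kernels and boundedness of $T$ on $\scrH^{0,0}$ is the correct way to fill it in.
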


\section{Boundary value problems}\label{sec:6}

We now consider Boutet de Monvel's algebra for boundary value problems and apply the 
general result to show the invertibilty of the Stokes operator within a certain Toeplitz 
subalgebra.  

\subsection{Operators on the half space}\label{sec:6.1}
We give a short presentation of Boutet de Monvel's algebra on the half-space 
$M:=\overline{\rz}_+^n=\rz^{n-1}\times[0,\infty)$. We shall employ the 
splitting of variables $x=(x^\prime,x_n)$ and $\xi=(\xi^\prime,\xi_n)$. 

The space $\calB^{\mu,d}(M;(n_0,j_0),(n_1,j_1))$ with $\mu\in\gz$ $($the \emph{order}$)$, 
$d\in\nz_0$ $($the \emph{type}$)$, and $n_\ell,j_\ell\in\nz_0$ consists of all operators of 
the form $\calA=\begin{pmatrix}A_++G&K\\ T & Q \end{pmatrix}$ where the single entries are 
described in the following:\footnote{
Also the cases $j_0=0$ or $j_1=0$ are allowed. If $j_0=0$ the operators have the form 
$\calA=\begin{pmatrix}A_++G\\ T\end{pmatrix}$, if $j_1=0$ the form 
$\calA=\begin{pmatrix}A_++G&K\end{pmatrix}$, and if $j_0=j_1=0$ the form 
$\calA=A_++G$.} 
\begin{itemize}
 \item[$(1)$] Denoting by $e_+$ the operator of extension by zero from $M$ to $\rz^n$ and by 
  $r_+$ the operator of restriction from $\rz^n$ to $M$, $A_+=r_+Ae_+$ with a classical 
  pseudodifferential operator $A=\op(a)$ of order $\mu$ whose $(n_1\times n_0)$-matrix valued 
  symbol satisfies the transmission condition at $x_n=0$, i.e., 
  if $a\sim\sum_j a_{(\mu-j)}$ is the expansion into homogeneous components, then 
   \begin{align*}
     D^k_{x_{n}}D^\alpha_{(\xi,\tau)} & a_{(\mu-j)}(x^\prime,0,0,1,0)\\
     &=(-1)^{\mu-j-|\alpha|} D^k_{x_{n}}D^\alpha_{(\xi,\tau)}a_{(\mu-j)}(x^\prime,0,0,-1,0) 
   \end{align*}   
   for all $j,k$ and $\alpha$. 
 \item[$(2)$] $T$ is a \emph{trace operator} of order $\mu$ and type $d$, i.e., 
  $T = \sum\limits_{j=0}^{d-1} S_{j} \gamma_{j} + T_0$,  
  where $\gamma_{j}u=\frac{\partial^j u}{\partial x_n^j}\Big|_{x_n=0}$, 
  $S_{j}$ is a pseudodifferential operator  of order $\mu-j$ on $\rz^{n-1}$ with 
  $(j_1\times n_0)$-matrix valued symbol, and $T_0$ is of the form 
   $$(T_0u)(x^\prime) = \int_{\rz^{n-1}}\int_{0}^\infty 
       e^{i x^\prime \xi^\prime}t(x^\prime,\xi^\prime;[\xi^\prime]x_n) \,
       (\calF_{x^\prime\to\xi^\prime}u)(\xi^\prime, x_{n})  \, dx_{n} \dbar\xi^\prime, $$ 
  where $t(x^\prime,\xi^\prime;s)$ is smooth and rapidly decreasing as a function of 
  $s\in[0,\infty)$, 
  is a $(j_1\times n_0)$-matrix valued symbol of order $\mu+1/2$ with respect to 
  $(x^\prime,\xi^\prime)$,  
  and $[\,\cdot\,]$ denotes a smooth positive function with $[\,\cdot\,]=|\cdot|$ outside a 
  neighborhood of the origin. 
 \item[$(3)$] A \emph{Poisson operator} $K$ of order $\mu$ $(d$ is  not relevant here$)$, i.e., 
    $$(Kv)(x)=\int_{\rz^{n-1}} e^{i x^\prime \xi^\prime}
        k(x^\prime,\xi^\prime;[\xi^\prime]x_n)\hat{v}(\xi^{\prime})\dbar \xi^\prime, $$
   where $k(x^\prime,\xi^\prime;s)$ has a structure analogous to $t$ from $(2)$, but $k$ is a 
   symbol of order $\mu-1/2$ with respect to $(x^\prime,\xi^\prime)$ and is 
   $(n_1\times j_0)$-matrix valued. 
 \item[$(4)$] A \emph{singular Green operator} of order $\mu$ and type $d$, i.e.,   
  $G=\sum\limits_{j=0}^{d-1}K_{j}\gamma_{j} + G_0$ with Poisson operators $K_{j}$ of 
  order $\mu-j$, and $G_0$ of the form 
    $$(G_0 u)(x) = \int_{\rz^{n-1}}\int_{0}^\infty e^{ix^\prime\xi^\prime}
        g(x^\prime,\xi^\prime;[\xi^\prime]x_n,[\xi^\prime]y_n)(\calF_{x^\prime\to\xi^\prime}u)
        (\xi^\prime, y_{n}) dy_{n} \dbar \xi^\prime,$$ 
  where $g(x^\prime,\xi^\prime;s,t)$ is smooth and rapidly decreasing as a function of 
  $(s,t)\in[0,\infty)^2$, and 
  is a $(n_1\times n_0)$-matrix valued symbol of order $\mu+1$ with respect to 
  $(x^\prime,\xi^\prime)$. 
 \item[$(5)$] A usual pseudodifferential operator $Q$ of order $\mu$ on $\rz^{n-1}$ with 
  $(j_1\times j_0)$-matrix valued symbol. 
\end{itemize}
Any such $\calA$ induces continuous operators 
\begin{equation}\label{eq:map}
 \calA:\;\begin{matrix} H^s(M,\cz^{n_0})\\ \oplus\\  H^s(\partial M,\cz^{j_0})\end{matrix}
  \longrightarrow
  \begin{matrix}H^{s-\mu}(M,\cz^{n_1})\\ \oplus\\  H^{s-\mu}(\partial M,\cz^{j_1})\end{matrix},
  \qquad s>d-\frac{1}{2}.
\end{equation}
With $\calA$ one associates two different principal symbols. The first is the usual homogeneous 
principal symbol of $A$ $($considered on $M$ including its boundary$)$, which we denote now with 
$\sigma_\psi^\mu(\calA)$. The second is the principal \emph{boundary symbol} 
 $$\sigma^\mu_\partial(\calA)(x^\prime,\xi^\prime):\;
   \begin{matrix} H^s(\rz_+,\cz^{n_0})\\ \oplus\\  \cz^{j_0}\end{matrix}
   \longrightarrow
   \begin{matrix}H^{s-\mu}(\rz_+,\cz^{n_1})\\ \oplus\\ \cz^{j_1}\end{matrix}$$
$($the choice of $s>d-\frac{1}{2}$ is arbitrary$)$ defined on $S^*\partial M\setminus 0$ as 
follows: With $A_+=r_+\op(a)e_+$ as above, 
  $$\sigma^\mu_\partial(A_+)(x^\prime,\xi^\prime)=a_{(\mu)}(x^\prime,0,\xi^\prime,D_{x_n}),$$
where $a_{(\mu)}$ is the principal component of $a$. For a trace operator $T$ as above,
  $$\sigma^\mu_\partial(T)(x^\prime,\xi^\prime)=
      \sum\limits_{j=0}^{d-1} s_{j,(\mu-j)}(x^\prime,\xi^\prime) r_{j} + 
      \sigma^\mu_\partial(T_0)(x^\prime,\xi^\prime),$$
where $S_j=\op(s_j)$, $r_ju=d^ju/dx_n^j(0)$, and 
  $$\sigma^\mu_\partial(T_0)(x^\prime,\xi^\prime)u=
      \int_{0}^\infty t(x^\prime,\xi^\prime;|\xi^\prime|x_n)u(x_n)\,dx_{n}.$$ 
For a Poisson operator $K$ as above,
  $$\sigma^\mu_\partial(K)(x^\prime,\xi^\prime)c
    =\big(x_n\mapsto k(x^\prime,\xi^\prime;|\xi^\prime|x_n)c\big)$$ 
$($mapping $c$ to a function of $x_n)$ and for a singular Green operator $G$ as above, 
  $$\sigma^\mu_\partial(G)(x^\prime,\xi^\prime)=
      \sum\limits_{j=0}^{d-1} \sigma^{\mu-j}(K_{j})(x^\prime,\xi^\prime) r_{j} + 
      \sigma^\mu_\partial(G_0)(x^\prime,\xi^\prime)$$
with 
  $$\big(\sigma^\mu_\partial(G_0)(x^\prime,\xi^\prime)u\big)(x_n)=
      \int_{0}^\infty g(x^\prime,\xi^\prime;|\xi^\prime|x_n,|\xi^\prime|y_n)u(y_n)\,dy_{n}.$$ 

\subsection{Operators on manifolds}\label{sec:6.2}
Via a partition of unity and local co-ordinates the calculus from the half-space as described above can be extended to compact manifolds $M$ with smooth boundary. We obtain classes $\calB^{\mu,d}(M;(E_0,J_0),(E_1,J_1))$ where $E_j$ and $J_j$ are vector bundles over $M$ and $\partial M$, respectively. 

Any $\calA\in\calB^{\mu,d}(M;(E_0,J_0),(E_1,J_1))$ induces continuous maps analogous 
to \eqref{eq:map}. We have the homogeneous principal symbol 
 $$\sigma_\psi^\mu(\calA):\; \pi^*E_0\lra\pi^*E_1$$
acting between the pull-backs of $E_j$ to the unit co-sphere bundle of $M$, and the boundary 
symbol 
 $$\sigma_\partial^\mu(\calA):\; 
     \pi_\partial^*\begin{pmatrix}E_0^\prime\otimes H^s(\rz_+)\\ \oplus \\ J_0\end{pmatrix}
     \lra
     \pi_\partial^*\begin{pmatrix}E_1^\prime\otimes H^{s-\mu}(\rz_+)\\ \oplus \\ 
     J_1\end{pmatrix},$$
where $\pi_\partial$ denotes the canonical projection of the unit co-sphere bundle of 
$\partial M$ to $\partial M$ and $E_j^\prime$ denotes the restriction of $E_j$ to the boundary. 

The composition of operators induces a map 
\begin{align}\label{eq:comp}
\begin{split}
 \calB^{\mu_1,d_1}&(M;(E_1,J_1),(E_2,J_2))\times \calB^{\mu_0,d_0}(M;(E_0,J_0),(E_1,J_1))\\
 &\lra \calB^{\mu_0+\mu_1,d}(M;(E_0,J_0),(E_2,J_2)),
 \qquad d=\max(d_0,d_1+\mu_0).
\end{split}
\end{align}
Both homogeneous principal symbol and principal boundary symbol are multiplicative under 
composition. 

The equivalence of ellipticity and Fredholm property in Boutet de Monvel's algebra have been shown 
in \cite{ReSc}, the spectral invariance in \cite{Schu54}. 

\subsection{Toeplitz subalgebras}\label{sec:6.3}
Let $\calP_j\in \calB^{0,0}(M;(E_j,J_j),(E_j,J_j))$, $j=0,1$, be two projections and set 
 $$ \calT^{\mu,d}(M;(E_0,J_0,P_0),(E_1,J_1,P_1))=
    \calP_1\,\calB^{\mu,d}(M;(E_j,J_j),(E_j,J_j))\,\calP_0.$$
Note that, according to \eqref{eq:comp}, 
 $$ \calT^{\mu,d}(M;(E_0,J_0,\calP_0),(E_1,J_1,\calP_1))\subset 
    \calB^{\mu,\max(\mu,d)}(M;(E_0,J_0),(E_1,J_1)). $$
We shall thus focus on the case $\mu\le d$. 
Since both $\sigma_\psi^0(\calP_j)$ and $\sigma_\partial^0(\calP_j)$ are projections we 
obtain subbundles 
\begin{align*}
 E^\psi_j(\calP_j)&:=\sigma_\psi^0(\calP_j)\big(\pi^*E_j\big)\subset \pi^*E_j,\\
 E^{\partial,s}_j(\calP_j)&:=\sigma_\partial^0(\calP_j)
  \left(
   \pi_\partial^*\begin{pmatrix}E_j^\prime\otimes H^s(\rz_+)\\ \oplus \\ J_j\end{pmatrix}
  \right)
  \subset\pi_\partial^*\begin{pmatrix}E_j^\prime\otimes H^s(\rz_+)\\ \oplus \\ J_j\end{pmatrix}. 
\end{align*}
The principal symbol ${\sigma}(\calA,\calP_0,\calP_1)$ consists of the two components 
\begin{align*}
 {\sigma}^\mu_\psi(\calA,\calP_0,\calP_1)&:\,E^\psi_0(\calP_0)\lra E^\psi_1(\calP_1),\\ 
 {\sigma}^\mu_\partial(\calA,\calP_0,\calP_1)&:\,E_0^{\partial,s}(\calP_0)
  \lra E_1^{\partial,s-\mu}(\calP_1),
\end{align*}
induced by the restriction of $\sigma_\psi^\mu(\calA)$ and $\sigma_\partial^\mu(\calA)$, 
respectively $($the choice of $s>d-1/2$ is arbitrary$)$. 
In the following let us write $t_+=\max(0,t)$. 

\begin{theorem}
For $\calA\in \calT^{\mu,\mu_+}(M;(E_0,J_0,\calP_0),(E_1,J_1,\calP_1))$ the following 
statements are equivalent: 
\begin{itemize}
 \item[$($a$)$] $\calA$ is elliptic, i.e., has a parametrix 
   $$\calB\in \calT^{-\mu,(-\mu)_+}(M;(E_1,J_1,\calP_1),(E_0,J_0,\calP_0)).\footnotemark$$   
 \item[$($b$)$] For\footnotetext{This means 
  \begin{align*}
   \calB\calA-\calP_0&\in \calT^{-\infty,\mu_+}(M;(E_0,J_0,\calP_0),(E_0,J_0,\calP_0))\\ 
   \calA\calB-\calP_1&\in \calT^{-\infty,(-\mu)_+}(M;(E_1,J_1,\calP_1),(E_1,J_1,\calP_1)).
  \end{align*}} 
  some $s>\mu_+-1/2$
   $$\calA:\;\calP_0\begin{pmatrix} H^s(M,E_0)\\ \oplus\\ H^s(\partial M,J_0)\end{pmatrix}
     \longrightarrow
     \calP_1\begin{pmatrix}H^{s-\mu}(M,E_1)\\ \oplus\\ H^{s-\mu}(\partial M,J_1)\end{pmatrix}$$
  is a Fredholm operator. 
 \item[$($c$)$] Both morphisms ${\sigma}^\mu_\psi(\calA,\calP_0,\calP_1)$ and 
  ${\sigma}^\mu_\partial(\calA,\calP_0,\calP_1)$ are isomorphisms. 
\end{itemize}
In this case, $($b$)$ is true for arbitrary $s>\mu_+-1/2$.  
\end{theorem}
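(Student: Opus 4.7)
The plan is to reduce the assertion to the general results of Section \ref{sec:3}, specifically Theorems \ref{thm:fred} and \ref{thm:sigma}, applied to Boutet de Monvel's algebra at order and type zero. With abstract data $g=(M,\partial M,E,J)$, I identify $L^0(\bfg)$ with $\calB^{0,0}(M;(E_0,J_0),(E_1,J_1))$ and take the abstract Sobolev spaces to be $H^s(g):=H^s(M,E)\oplus H^s(\partial M,J)$. Standard facts about Boutet de Monvel's algebra yield that $L^\star$ is asymptotically complete and $*$-closed, and that it has the Fredholm property (the latter being the main result of \cite{ReSc}). The pair $\sigma:=(\sigma_\psi^0,\sigma_\partial^0)$ makes $L^\star$ into a $\sigma$-algebra in the sense of Section \ref{sec:3.4}, with the boundary symbol taking values in the Hilbert space bundles $\pi_\partial^\ast(E'\otimes L^2(\rz_+)\oplus J)$; multiplicativity and compatibility with adjoints are classical. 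Applying Theorems \ref{thm:fred} and \ref{thm:sigma} in this setting directly yields the equivalence of (a), (b), (c) in the case $\mu=0$, together with $s$-independence of the Fredholm property.

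To pass to arbitrary $\mu\in\gz$, I use invertible order-reducing elements $\Lambda^\sigma_{E,J}\in\calB^{\sigma,\sigma_+}(M;(E,J),(E,J))$ with $(\Lambda^\sigma)^{-1}=\Lambda^{-\sigma}$, whose existence is well known. A short bookkeeping with the composition rule \eqref{eq:comp} shows that $\wt\calA:=\calA\,\Lambda^{-\mu}_{E_0,J_0}$ belongs to $\calB^{0,0}$ if $\mu\ge 0$, while $\wt\calA:=\Lambda^{-\mu}_{E_1,J_1}\calA$ does so if $\mu<0$; in either case $\wt\calA$ lies in a Toeplitz subalgebra with conjugated projections (e.g.\ $\wt\calP_0:=\Lambda^{\mu}\calP_0\Lambda^{-\mu}$), exactly as in the reduction scheme of Section \ref{sec:3.3}. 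Each of (a), (b), (c) for $\calA$ is then equivalent to the corresponding statement for $\wt\calA$, and further conjugation with order reductions on source or target yields the $s$-independence of (b) for $s>\mu_+-1/2$.

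The main technical obstacle is the type parameter $d$, which has no counterpart in the abstract framework of Section \ref{sec:2}. The hypothesis $d=\mu_+$ is essential: by the composition rule \eqref{eq:comp} it is precisely the smallest type that is stable under Toeplitz products $\calP_1\calA\calP_0$ in this order, and it ensures that a parametrix produced by the abstract argument automatically lands in $\calT^{-\mu,(-\mu)_+}$ rather than in a strictly larger type class. One must also check that the auxiliary operator $B=\calA^\ast\calA+(1-\calP_0)^\ast(1-\calP_0)$ built in the proof of Theorem \ref{thm:fred} remains in $\calB^{0,0}$ after order reduction; this is immediate since we have reduced to $\mu=0=d$ and $\calB^{0,0}$ is $*$-closed. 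Once this type bookkeeping is settled, the proof is a direct specialization of the general framework.
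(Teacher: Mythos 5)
Your proposal follows the paper's own proof exactly: take $L^\mu(\bfg)=\calB^{\mu,0}(M;(E_0,J_0),(E_1,J_1))$ with $H^s(g)=H^s(M,E)\oplus H^s(\partial M,J)$, apply Theorems~\ref{thm:fred} and~\ref{thm:sigma} in the case $\mu=s=0$, and then pass to general $\mu$ and $s$ via invertible order reductions and conjugated projections, with the type bookkeeping handled by the composition rule~\eqref{eq:comp}. The only place you could be sharper is the order reductions: the references (\cite{Bout1}, \cite{ReSc}, \cite{Grub}) supply them of \emph{type zero}, i.e.\ $\calR^\mu\in\calB^{\mu,0}$, which is what the paper uses and makes the type bookkeeping immediate, rather than the weaker type-$\sigma_+$ claim you invoke.
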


Note that this result is both a generalization and strengthening of Theorem 2.2 of 
\cite{Schu37} in the case $d=\mu_+$. Also here the crucial point is to reduce to the 
case of $\mu=s=0$. This is then a particular case of Theorems \ref{thm:fred} and 
\ref{thm:sigma}, with 
\begin{align*}
 L^{\mu}(\bfg)&=\calB^{\mu,0}(M;(E_0,J_0),(E_1,J_1)),\qquad 
 \bfg=\big((M,E_0,J_0),(M,E_1,J_1)\big)\\
 H^s(g)&=
 \begin{matrix}H^s(M,E)\\ \oplus\\ H^s(\partial M,J)\end{matrix}, 
 \qquad g=(M,E,J).
\end{align*}
In fact, this reduction is possible, since one can show that for any bundles $E$ and $J$ 
there exist elements $\calR^\mu\in \calB^{\mu,0}(M;(E,J),(E,J))$, $\mu\in\gz$, that induce 
isomorphisms 
$H^s(M,E) \oplus H^s(\partial M,J)\to H^{s-\mu}(M,E) \oplus H^{s-\mu}(\partial M,J)$ and 
such that $(\calR^\mu)^{-1}=\calR_{-\mu}$; see \cite{Bout1}, \cite{ReSc}, \cite{Grub}. 
This also allows to obtain the spectral invariance, using Theorem \ref{thm:spectral}.  

\begin{theorem}\label{thm:inv}
If $\calA\in \calT^{\mu,\mu_+}(M;(E_0,J_0,\calP_0),(E_1,J_1,\calP_1))$ induces an isomorphism 
 $$\calA:\;\calP_0\begin{pmatrix} H^s(M,E_0)\\ \oplus\\ H^s(\partial M,J_0)\end{pmatrix}
   \longrightarrow
   \calP_1\begin{pmatrix}H^{s-\mu}(M,E_1)\\ \oplus\\ H^{s-\mu}(\partial M,J_1)\end{pmatrix}$$
for some $s>\mu_+-1/2$, then for all $s>\mu_+-1/2$ and 
   $$\calA^{-1}\in \calT^{-\mu,(-\mu)_+}(M;(E_1,J_1,\calP_1),(E_0,J_0,\calP_0)).$$ 
\end{theorem}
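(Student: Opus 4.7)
The plan is to imitate the reduction used in the proof of the preceding theorem: conjugate $\calA$ by order reductions so as to arrive at a zero-order element acting between $L_2$-type spaces, apply Theorem \ref{thm:spectral} to the conjugated element, and then transfer the inverse back. The order-reduction operators $\calR^\mu \in \calB^{\mu,0}(M;(E,J),(E,J))$, with $(\calR^\mu)^{-1}=\calR^{-\mu}$ belonging to the algebra, that were recalled just before the statement provide exactly the tool needed.

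Fix $s > \mu_+ - 1/2$ for which $\calA$ is assumed to be an isomorphism and choose reductions $\calR_0^{-s}, \calR_0^{s}$ over $(E_0,J_0)$ and $\calR_1^{s-\mu}, \calR_1^{\mu-s}$ over $(E_1,J_1)$. Put
$$\wt\calA := \calR_1^{s-\mu}\, \calA\, \calR_0^{-s}, \qquad
  \wt\calP_0 := \calR_0^{s}\calP_0\calR_0^{-s}, \qquad
  \wt\calP_1 := \calR_1^{s-\mu}\calP_1\calR_1^{\mu-s}.$$
Each $\wt\calP_j$ is then a projection of order and type zero in Boutet de Monvel's algebra, and $\wt\calA \in \calT^{0,0}(M;(E_0,J_0,\wt\calP_0),(E_1,J_1,\wt\calP_1))$. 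Because the $\calR_j^\nu$ are isomorphisms on Sobolev scales of all smoothness, $\wt\calA$ is an isomorphism at the $L_2$-level between the correspondingly projected spaces.

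At this point one applies Theorem \ref{thm:spectral} within the abstract framework already used in the proof of the preceding theorem: $L^\mu(\bfg)=\calB^{\mu,0}$ with $H^0(g)=L_2(M,E)\oplus L_2(\partial M, J)$. The hypothesis of Theorem \ref{thm:spectral} is satisfied because elements of $\calB^{-\infty,0}$ are integral operators whose kernels lie in a Schwartz-type space, and such kernel regularity is preserved by pre-/post-composition with any bounded operator on $L_2$; Fredholm property and $*$-closedness are standard facts of Boutet de Monvel's calculus. One thus obtains $\wt\calB \in \calT^{0,0}(M;(E_1,J_1,\wt\calP_1),(E_0,J_0,\wt\calP_0))$ with $\wt\calB\wt\calA=\wt\calP_0$ and $\wt\calA\wt\calB=\wt\calP_1$. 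Setting $\calB := \calR_0^{-s}\,\wt\calB\,\calR_1^{s-\mu}$ and substituting back gives $\calB\calA=\calP_0$ and $\calA\calB=\calP_1$ on the smoothness-$s$ projected spaces, while the composition rule \eqref{eq:comp} places $\calB$ in $\calT^{-\mu,(-\mu)_+}(M;(E_1,J_1,\calP_1),(E_0,J_0,\calP_0))$. Finally, because $\calB$ is a calculus element it acts continuously on every projected Sobolev scale of smoothness $>(-\mu)_+ - 1/2$, and the inversion identities then automatically extend to every such scale, proving the theorem for all admissible $s$.

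The main obstacle I anticipate is the type bookkeeping: the composition law \eqref{eq:comp} raises types as $d=\max(d_0,d_1+\mu_0)$, so in each of the three successive compositions defining $\calB$ one must verify that the type actually drops to $(-\mu)_+$ rather than to something larger. Since the reductions $\calR_j^\nu$ all have type $0$ and $\wt\calB$ has type $0$, this amounts to a careful arithmetic check on the three-fold composition, but it is precisely the point where the choice of order reductions within $\calB^{\cdot,0}$ (type zero) is essential.
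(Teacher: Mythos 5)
Your overall strategy --- reduce to order zero and $L_2$-smoothness via type-zero order reductions, apply the abstract spectral invariance Theorem~\ref{thm:spectral}, and conjugate back --- is exactly the paper's argument (which states it in a single sentence). But there is a genuine gap at the point you flag, and the ``careful arithmetic check'' you defer to does not close it. Applying the composition law $d=\max(d_0,d_1+\mu_0)$ from \eqref{eq:comp} to $\calB=\calR_0^{-s}\wt\calB\calR_1^{s-\mu}$ gives type $\max(0,\,0+(s-\mu))=(s-\mu)_+$ for the inner product $\wt\calB\calR_1^{s-\mu}$, and again $(s-\mu)_+$ for the full composition. This exceeds $(-\mu)_+$ whenever $s>\mu_+$, which is the generic situation. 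The type-zero property of the $\calR_j^\nu$ and of $\wt\calB$ does not prevent this growth, because the positive order $s-\mu$ of the \emph{rightmost} factor $\calR_1^{s-\mu}$ gets added to the types of everything composed on its left. So the asserted membership $\calB\in\calT^{-\mu,(-\mu)_+}$ does not follow from \eqref{eq:comp} at the given level $s$. Your final step, extending the inversion identities to all admissible $s$ ``because $\calB$ is a calculus element'' of type $(-\mu)_+$, is therefore circular: it presupposes the very type bound that is in question.

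To close the gap, first note that the single-level isomorphism already gives ellipticity (Fredholm $\iff$ elliptic, by the preceding theorem). Ellipticity makes $\calA$ Fredholm at \emph{every} $s'>\mu_+-1/2$ with $s'$-independent kernel and cokernel, hence an isomorphism at every such $s'$; this is also needed to justify the ``then for all $s$'' part of the statement. With this in hand you may carry out your conjugation at $s'=\mu_+$, where $(s'-\mu)_+=(\mu_+-\mu)_+=(-\mu)_+$ and the type bound comes out correctly. Alternatively, and closer to the mechanism of Theorem~\ref{thm:spectral}, observe that $\calA^{-1}=\calB_0+\calR_0\calB_0+\calR_0\calP_0\calA^{-1}\calP_1\calR_1$ with $\calB_0$ a parametrix of type $(-\mu)_+$ and $\calR_0,\calR_1$ the parametrix remainders; a short type computation (using $(-\mu)_++\mu=\mu_+$ and $\mu_+-\mu=(-\mu)_+$) shows each summand lies in $\calT^{-\mu,(-\mu)_+}$. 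Either of these arguments supplies the structural, not merely arithmetical, step that the conjugation alone cannot.
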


\subsection{The Stokes operator}\label{sec:6.4}

We shall now apply the results of the previous subsection to the Stokes operator. We shall 
need the following result on the existence of reductions of orders in Toeplitz subalgebras. 

\begin{lemma}\label{lem:red}
Let $Y$ be a closed Riemannian manifold and $E$ a hermitian vector bundle over $Y$. 
Moreover, let $P\in L^0_\cl(Y;E,E)$ be an orthogonal projection and $\mu\in\rz$. 
Then there exist $R^t\in T^t(Y;(E,P),(E,P))$ for $t=\mu$ and $t=-\mu$ such that 
$R^\mu R^{-\mu}=R^{-\mu}R^{\mu}=P$. 
\end{lemma}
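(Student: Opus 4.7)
The plan is to lift a self-adjoint positive order reduction from the full algebra $L^\star_\cl(Y;E,E)$ to one that commutes with $P$, invert it globally using the spectral invariance of $L^\star_\cl$, and then read off $R^{\pm\mu}$ by compression with $P$.

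Concretely, I would start with a classical self-adjoint positive order reduction $\Lambda^\mu\in L^\mu_\cl(Y;E,E)$, for instance $\Lambda^\mu=(1+\Delta_E)^{\mu/2}$ defined via Seeley's complex powers from the Bochner Laplacian $\Delta_E$ on $E$; then $\Lambda^\mu$ is self-adjoint, positive on $L^2(Y,E)$, and $\Lambda^\mu\colon H^s(Y,E)\to H^{s-\mu}(Y,E)$ is an isomorphism for all $s$, with $(\Lambda^\mu)^{-1}=\Lambda^{-\mu}\in L^{-\mu}_\cl(Y;E,E)$. Next, since $P$ is an \emph{orthogonal} projection (so $P^\ast=P$ and $\sigma^0(P)$ is a fibrewise orthogonal projection), I would ``$P$-symmetrize'' $\Lambda^\mu$ by setting
\begin{equation*}
 \widetilde R^\mu := P\,\Lambda^\mu\,P + (1-P)\,\Lambda^\mu\,(1-P)\ \in\ L^\mu_\cl(Y;E,E).
\end{equation*}
A direct computation using $P^2=P$ gives $P\widetilde R^\mu = P\Lambda^\mu P = \widetilde R^\mu P$, so $P$ commutes with $\widetilde R^\mu$.

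Now $\widetilde R^\mu$ is self-adjoint and strictly positive: for any nonzero $u\in H^{\mu/2}(Y,E)$,
\begin{equation*}
 \langle \widetilde R^\mu u,u\rangle = \langle \Lambda^\mu Pu,Pu\rangle + \langle \Lambda^\mu(1-P)u,(1-P)u\rangle > 0,
\end{equation*}
and its principal symbol $\sigma^\mu(\widetilde R^\mu) = p\,\sigma^\mu(\Lambda^\mu)\,p + (1-p)\,\sigma^\mu(\Lambda^\mu)\,(1-p)$, with $p=\sigma^0(P)$, is a positive self-adjoint endomorphism of $\pi^\ast E$, hence invertible. Therefore $\widetilde R^\mu\colon H^s(Y,E)\to H^{s-\mu}(Y,E)$ is an isomorphism for every $s$, and the classical spectral invariance of $L^\star_\cl(Y;E,E)$ yields $(\widetilde R^\mu)^{-1}\in L^{-\mu}_\cl(Y;E,E)$. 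Since $P$ commutes with $\widetilde R^\mu$, it also commutes with $(\widetilde R^\mu)^{-1}$.

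Finally, I would define
\begin{equation*}
 R^\mu := P\,\Lambda^\mu\,P = P\widetilde R^\mu P\ \in\ T^\mu(Y;(E,P),(E,P)),\qquad
 R^{-\mu} := P\,(\widetilde R^\mu)^{-1}\,P\ \in\ T^{-\mu}(Y;(E,P),(E,P)),
\end{equation*}
which clearly lie in the respective Toeplitz subalgebras. Using $[P,\widetilde R^\mu]=[P,(\widetilde R^\mu)^{-1}]=0$ and $P^2=P$:
\begin{equation*}
 R^\mu R^{-\mu} = P\widetilde R^\mu P\cdot P(\widetilde R^\mu)^{-1}P = P\widetilde R^\mu (\widetilde R^\mu)^{-1} P^2 = P,
\end{equation*}
and symmetrically $R^{-\mu}R^\mu=P$, which is the required identity.

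The only real analytical input is the existence of a self-adjoint positive classical order reduction $\Lambda^\mu$ and the spectral invariance of the full algebra $L^\star_\cl(Y;E,E)$; both are classical facts. Once those are in hand, the $P$-symmetrization trick produces an element of $L^\mu_\cl$ that commutes with $P$, and everything else is algebraic. The main subtlety to watch, were one to relax the orthogonality of $P$, is that self-adjointness of $\widetilde R^\mu$ would fail and the inversion step would need a different justification; here, however, $P^\ast=P$ makes that argument automatic.
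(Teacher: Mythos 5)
Your proof is correct and rests on the same central device as the paper's: the $P$-symmetrization $\widetilde R^\mu = P\Lambda^\mu P + (1-P)\Lambda^\mu(1-P)$ (the paper writes $S$ for your $\Lambda^\mu$ and $R$ for your $\widetilde R^\mu$), together with the observation that this operator is self-adjoint, positive, and elliptic, hence invertible with inverse in $L^{-\mu}_\cl(Y;E,E)$. The one place where you genuinely diverge is the concluding step: the paper observes that $R^\mu:=PSP$ restricts to an isomorphism $H^s(Y,E,P)\to H^{s-\mu}(Y,E,P)$ and then appeals to the Toeplitz spectral invariance result (Theorem \ref{thm:spectral}) to deduce that the inverse lies in $T^{-\mu}(Y;(E,P),(E,P))$, whereas you produce $R^{-\mu}$ explicitly as $P(\widetilde R^\mu)^{-1}P$, using the fact that $(\widetilde R^\mu)^{-1}$ commutes with $P$ and then verifying $R^\mu R^{-\mu}=R^{-\mu}R^\mu=P$ by direct computation. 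Your route is a bit more self-contained --- it relies only on the classical spectral invariance of $L^\star_\cl(Y;E,E)$ rather than on the abstract machinery of Section~3 --- while the paper's version is shorter because it reuses Theorem \ref{thm:spectral}, which the author has already established. Both arguments are valid; the commutativity of $P$ with $\widetilde R^\mu$, and hence with its inverse, is exactly the point that makes the explicit construction work.
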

\begin{proof}
We can assume $\mu>0$. Choose an $S\in L^\mu_\cl(Y;E,E)$ which is invertible, 
symmetric, and satisfies  
 $$(Su,u)>0,\qquad \text{for all }u\in\scrC^\infty(Y,E),$$
where $(\cdot,\cdot)$ is a scalar-product of $L^2(Y,E)$. Then 
$R:=PSP+(1-P)S(1-P)$ is also symmetric and 
 $$(Ru,u)=(SPu,Pu)+(S(1-P)u,(1-P)u)>0,\qquad \text{for all }u\in\scrC^\infty(Y,E).$$
Since the spectrum of elliptic and positiv operators consists of isolated positiv eigenvalues, 
we conclude that $R$ is invertible with inverse in $L^{-\mu}_\cl(Y;E,E)$. However, then 
$R^\mu:=PSP$ induces isomorphisms $H^s(Y,E,P)\to H^{s-\mu}(Y,E,P)$. Due to spectral invariance, 
cf.\ Theorem \ref{thm:spectral}, the claim follows. 
\end{proof}

Now let $n\ge2$ be the dimension of $M$ and let  
 $$L^2_\sigma(M,\cz^n)=\Big\{u\in L^2(M,\cz^n)\st \mathrm{div}\,u=0,\;\gamma_\nu u=0\Big\}$$
denote the space of square integrable solenoidal vector fields; here we use the notation 
 $$\gamma u=u|_{\partial M},\qquad \gamma_\nu u=\nu\cdot\gamma u,$$
where $\nu$ denotes the outer normal of $M$. Also let us set 
 $$H^s_\sigma(M,\cz^n)=H^s(M,\cz^n)\cap L^2_\sigma(M,\cz^n).$$
It has been shown in \cite{GrSo} that there is a projection 
$P\in \calB^{0,0}(M;(\cz^n,0),(\cz^n,0))$ 
$($the Helmholtz projection, of course$)$ such that 
 $$H^s_\sigma(M,\cz^n)=P\big(H^s(M,\cz^n)\big).$$
Let us define the projection $Q\in L^0_\cl(\partial M;\cz^n,\cz^n)$ by 
\begin{equation}\label{eq:pr}
 Qv=v-(v\cdot\nu)\nu.
\end{equation} 
This induces the orthogonal projection of $H^s(\partial M,\cz^n)$ onto 
 $$H^s_\nu(\partial M,\cz^n):=\big\{v\in H^s(\partial M,\cz^n)\st v\cdot\nu=0\big\}.$$
This space arises by restricting solenoidal vector fields to the boundary, i.e., 
\begin{equation}\label{eq:stokes0}
 \gamma:H^s_\sigma(M,\cz^n) \lra H^s_\nu(\partial M,\cz^n)
\end{equation} 
surjectively, cf.\ Proposition 2.1 in \cite{Giga}. 
The Stokes operator $($with Dirichlet boundary conditions$)$ is now 
\begin{equation}\label{eq:stokes1}
 \begin{pmatrix}P\Delta\\ \gamma\end{pmatrix}:\;
 H^{s}_\sigma(M,\cz^n)
 \lra
 \begin{matrix}H^{s-2}_\sigma(M,\cz^n)\\ \oplus\\ H^{s-1/2}_\nu(M,\cz^n)\end{matrix}.
\end{equation} 
Due to Lemma \ref{lem:red} we can choose an $R\in T^{3/2}(\partial M;\cz^n,\cz^n)$ 
inducing isomorphisms $H^{s}_\nu(M,\cz^n)\to H^{s-3/2}_\nu(M,\cz^n)$ for all $s$, 
and then consider 
\begin{equation}\label{eq:stokes2}
 \begin{pmatrix}P\Delta\\ T\end{pmatrix}:=
 \begin{pmatrix}1&0\\0&R\end{pmatrix}
 \begin{pmatrix}P\Delta\\ \gamma\end{pmatrix}:\;
 H^{s}_\sigma(M,\cz^n)
 \lra
 \begin{matrix}H^{s-2}_\sigma(M,\cz^n)\\ \oplus\\ H^{s-2}_\nu(M,\cz^n)\end{matrix}.
\end{equation} 
Now we can rewrite 
\eqref{eq:stokes2} in the form 
\begin{equation}\label{eq:stokes3}
 \calA:=\begin{pmatrix}P&0\\ 0&Q\end{pmatrix}
 \begin{pmatrix}\Delta\\ T\end{pmatrix}P. 
\end{equation} 
Setting 
\begin{align*}
 \calP_0&=P\in B^{0,0}(M;(\cz^n,0),\cz^n,0)),\\
 \calP_1&=\begin{pmatrix}P&0\\ 0&Q\end{pmatrix}\in \calB^{0,0}(M;(\cz^n,\cz^n),(\cz^n,\cz^n)),
\end{align*}
we obtain that $\calA$ belongs to a Toeplitz subalgebra, 
 $$\calA\in \calT^{2,2}(M;(\cz^n,0,\calP_0),(\cz^n,\cz^n,\calP_1)).$$
Now \eqref{eq:stokes1} is an isomorphism for $s=2$, hence so is \eqref{eq:stokes2}. In fact, \eqref{eq:stokes0} is surjective and the Dirichlet realization \eqref{eq:st} is known to be self-adjoint and positive, hence is an isomorphism. Then we can use the elementray fact that a linear map 
 $$\begin{pmatrix}A\\T\end{pmatrix}:H\lra \begin{matrix}E\\ \oplus \\ F\end{matrix}$$
is an isomorphism if, and only if, $T:H\to F$ is surjective and 
 $A:\mathrm{ker}\,T\to E$
is an isomorphism.

From Theorem \ref{thm:inv} we can conclude that the 
invertibility of $\calA$ holds for any $s>3/2$ and that the inverse is realized by an element 
of $\calT^{-2,0}(M;(\cz^n,\cz^n,\calP_1),(\cz^n,0,\calP_0))$. Reformulating this result for the 
original Stokes operator \eqref{eq:stokes1} yields the following: 

\begin{theorem}
The Stokes operator with Dirichlet boundary conditions,   
\begin{equation*}
 \begin{pmatrix}P\Delta\\ \gamma\end{pmatrix}:\;
 H^{s}_\sigma(M,\cz^n)
 \lra
 \begin{matrix}H^{s-2}_\sigma(M,\cz^n)\\ \oplus\\ H^{s-1/2}_\nu(M,\cz^n)\end{matrix},
\end{equation*} 
is invertible for any $s>3/2$ and 
 $$\begin{pmatrix}P\Delta\\ \gamma\end{pmatrix}^{-1}=
   \begin{pmatrix}P(A_++G)P & PKSQ\end{pmatrix},$$
where $P$ is the Helmholtz projection, $Q$ the orthogonal projection from \eqref{eq:pr},  
$S\in L^{-3/2}_\cl(\partial M;\cz^n,\cz^n)$, and 
 $$\begin{pmatrix}A_++G & K\end{pmatrix}\in\calB^{-2,0}(M;(\cz^n,\cz^n),(\cz^n,0)).$$
\end{theorem}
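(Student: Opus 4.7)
The plan is to view the Stokes operator, after a boundary order reduction, as an element of a Toeplitz subalgebra of Boutet de Monvel's algebra and then invoke the spectral invariance result Theorem \ref{thm:inv}. The central object is the operator $\calA$ from \eqref{eq:stokes3}: on $H^s_\sigma(M,\cz^n) = \calP_0 H^s(M,\cz^n)$ the Stokes operator and $\calA$ are related by $\calA|_{H^s_\sigma} = \mathrm{diag}(1, R)\bigl(\begin{smallmatrix}P\Delta\\ \gamma\end{smallmatrix}\bigr)$, where the factor $\mathrm{diag}(1, R)$ is an isomorphism on the boundary component.

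First I would place $\calA$ in the Toeplitz subalgebra $\calT^{2,2}(M;(\cz^n,0,\calP_0),(\cz^n,\cz^n,\calP_1))$. This requires checking that $\Delta$ is a differential operator of order $2$ and thus lies in $\calB^{2,2}$, that $T = R\gamma$ is a trace operator of order $2$ and type at most $2$ (since $\gamma$ has type one and left composition with the boundary pseudodifferential operator $R$ of order $3/2$ does not raise the type), and that $\calA = \calP_1 \bigl(\begin{smallmatrix}\Delta\\ T\end{smallmatrix}\bigr)\calP_0$ is already in projection-sandwich form. In particular the type of $\calA$ equals $\mu_+ = 2$, matching the hypothesis of Theorem \ref{thm:inv} for $\mu = 2$.

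Next, I would establish invertibility at $s = 2$. The elementary block-matrix fact stated in the paper reduces this to (a) surjectivity of $\gamma : H^2_\sigma(M,\cz^n) \to H^{3/2}_\nu(\partial M,\cz^n)$, which is Proposition 2.1 of \cite{Giga}, and (b) bijectivity of $P\Delta$ from $\ker \gamma \cap H^2_\sigma$ onto $L^2_\sigma(M,\cz^n)$, which is the self-adjointness and positivity of the Dirichlet realization \eqref{eq:st} at $s = 2$, taken as input. Composing with $\mathrm{diag}(1, R)$ transfers invertibility to $\calA$ at $s = 2$.

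Finally, Theorem \ref{thm:inv} gives invertibility of $\calA$ for every $s > 3/2$ and places $\calA^{-1}$ in $\calT^{-2, 0}(M; (\cz^n, \cz^n, \calP_1), (\cz^n, 0, \calP_0))$. Since $J_1 = 0$, the underlying Boutet de Monvel element is a row $(A_+ + G,\; K) \in \calB^{-2,0}(M;(\cz^n,\cz^n),(\cz^n,0))$, so $\calA^{-1} = P \bigl(A_+ + G,\; K\bigr)\mathrm{diag}(P, Q)$. The Stokes inverse then arises as $\calA^{-1}\,\mathrm{diag}(1, R)$ composed on the codomain of the Stokes operator, yielding the stated row with boundary factor of the form $PKSQ$ for a suitable $S$ on $\partial M$. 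The main delicacy I anticipate is in this last bookkeeping: matching the explicit composition of the Poisson part $K$ with the boundary order reduction to the stated form $PKSQ$ with $S \in L^{-3/2}_\cl(\partial M;\cz^n,\cz^n)$, and confirming that $\calA$ has exact type $\mu_+ = 2$ so that Theorem \ref{thm:inv} applies with parametrix type $(-\mu)_+ = 0$ as needed for the asserted membership $(A_+ + G,\; K) \in \calB^{-2,0}$.
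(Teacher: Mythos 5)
Your proposal is correct and matches the paper's proof step for step: pass from the Stokes operator to $\calA$ via the boundary order reduction $R$, verify $\calA\in\calT^{2,2}(M;(\cz^n,0,\calP_0),(\cz^n,\cz^n,\calP_1))$, establish invertibility at $s=2$ from the surjectivity of $\gamma$ and the self-adjointness and positivity of the Dirichlet realization, and conclude by the spectral invariance Theorem \ref{thm:inv}. On the bookkeeping you flag as delicate: the Stokes inverse is $\calA^{-1}\,\mathrm{diag}(1,R)$ with $\calA^{-1}=(P(A_++G)P,\,PKQ)$ and $R=QRQ$ of order $+3/2$, so the boundary entry is $PKRQ$ and the factor $S$ actually lies in $L^{3/2}_\cl(\partial M;\cz^n,\cz^n)$, not $L^{-3/2}_\cl$ as written in the theorem statement.
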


Since operators from Boutet de Monvel's algebra are known to act continuously in $L_p$-Sobolev and Besov spaces, cf.\ \cite{GrKo}, the previous theorem remains valid in a corresponding $L_p$-version with $1<p<\infty$. 

\subsection*{Acknowledgements}
The author thanks Robert Denk (Konstanz) and J\"urgen Saal (Darmstadt) for helpful discussions concerning the Stokes operator.

\begin{small}
\bibliographystyle{amsalpha}

\end{small}

\end{document}